\newtheorem{theorem}{Theorem}[section]
\newtheorem{lemma}[theorem]{Lemma}
\newtheorem{cor}[theorem]{Corollary}
\theoremstyle{definition}
\newtheorem{definition}[theorem]{Definition}
\newtheorem{example}[theorem]{Example}
\newtheorem{rem}[theorem]{Remark}
\theoremstyle{remark}
\numberwithin{equation}{section}
\begin{document}

\title[Closure of the cone of sums of $2d$-powers]{Closure of the cone of sums of $2d$-powers in certain weighted $\ell_1$-seminorm topologies}

\author{Mehdi Ghasemi, Murray Marshall, Sven Wagner}

\address{Department of Mathematics and Statistics,
University of Saskatchewan, \newline \indent
Saskatoon, SK S7N 5E6, Canada}
\email{mehdi.ghasemi@usask.ca, marshall@math.usask.ca, wagner@math.usask.ca}
\keywords{positive definite, moments, sums of squares, involutive semigroups}
\subjclass[2000]{Primary 43A35 Secondary 44A60, 13J25}
\date{July 19, 2011}

\begin{abstract} In \cite{BC} Berg, Christensen and Ressel prove that the closure of the cone of sums of squares $\sum \mathbb{R}[\underline{X}]^2$ in the polynomial ring $\mathbb{R}[\underline{X}] := \mathbb{R}[X_1,\dots,X_n]$ in the topology induced by the $\ell_1$-norm is equal to $\operatorname{Pos}([-1,1]^n)$, the cone consisting of all polynomials which are non-negative on the hypercube $[-1,1]^n$. The result is deduced as a corollary of a general result, also established in \cite{BC}, which is valid for any commutative semigroup. 
In later work Berg and Maserick \cite{BM} and Berg, Christensen and Ressel \cite{BCR}
establish an even more general result, for a commutative semigroup with involution, for the closure of the cone of sums of squares of symmetric elements in the weighted $\ell_1$-seminorm topology associated to an absolute value.
In the present paper we give a new proof of these results which is based on Jacobi's representation theorem \cite{J}. At the same time, we use Jacobi's representation theorem to extend these results from sums of squares to sums of $2d$-powers, proving, in particular, that for any integer $d\ge 1$, the closure of the cone of sums of $2d$-powers $\sum \mathbb{R}[\underline{X}]^{2d}$ in $\mathbb{R}[\underline{X}]$ in the topology induced by the $\ell_1$-norm is equal to $\operatorname{Pos}([-1,1]^n)$.
\end{abstract}

\maketitle

\section{introduction}

We denote the polynomial ring $\mathbb{R}[X_1,\dots,X_n]$ by $\mathbb{R}[\underline{X}]$ for short.
It was known already by Hilbert \cite{H} that for $n\ge 2$ there are polynomials in $\mathbb{R}[\underline{X}]$ which are non-negative on all of $\mathbb{R}^n$ but are not in the cone $\sum \mathbb{R}[\underline{X}]^2$ consisting of sums of squares. The first explicit example was given by Motzkin \cite{Mot}. Today, many examples are known, e.g., see \cite{Bl}. In \cite{BCJ} Berg, Christensen and Jensen prove that, in the finest locally convex topology, $\sum \mathbb{R}[\underline{X}]^2$ is closed in $\mathbb{R}[\underline{X}]$; also see \cite{S}.

In marked contrast to this result, in \cite{BC} Berg, Christensen and Ressel show that the closure of $\sum \mathbb{R}[\underline{X}]^2$ in $\mathbb{R}[\underline{X}]$ in the topology induced by the $\ell_1$-norm is equal to $\operatorname{Pos}([-1,1]^n)$, the set of all polynomials in $\mathbb{R}[\underline{X}]$ which are non-negative on $[-1,1]^n$. In \cite{BC} the aforementioned result is established in the general context of commutative semigroups. In \cite{BCR} and \cite{BM} the results in \cite{BC} are extended further, to include commutative semigroups with involution and topologies induced by absolute values.

Let $d$ be a positive integer and let $M \subseteq \mathbb{R}[\underline{X}]$ be a $\sum \mathbb{R}[\underline{X}]^{2d}$-module which is archimedean.
In \cite{J} Jacobi proves that any $f\in \mathbb{R}[\underline{X}]$ which is strictly positive on $$K_M:=\{ \underline{x} \in \mathbb{R}^n \mid g(\underline{x})\ge 0\ \forall \ g\in M\}$$ belongs to $M$. Actually, Jacobi proves a more general version of this result which is valid for any commutative ring $A$ with $1$; see Section \ref{background}. There is special interest in the case where $d=1$, $A = \mathbb{R}[\underline{X}]$ and $M$ is finitely generated, because of the application to polynomial optimization in this case;
see \cite{L}. Jacobi's result in this case can be seen as a consequence of Putinar's criterion in \cite{P}.

In the present paper we use Jacobi's theorem to give a new proof of the result of Berg, Christensen and Ressel in \cite{BC} referred to above. At the same time we use Jacobi's theorem to extend this result, proving, for any integer $d\ge 1$, that the closure of $\sum \mathbb{R}[\underline{X}]^{2d}$ in $\mathbb{R}[\underline{X}]$ in the topology induced by the $\ell_1$-norm is equal to $\operatorname{Pos}([-1,1]^n)$. As in \cite{BC}, \cite{BCR} and \cite{BM}, extensions of this result to
absolute values on commutative semigroups with involution are also developed.

In Section \ref{background} we provide necessary background.
In Sections \ref{main case}, \ref{extension} and \ref{weak absolute values} we explain how Jacobi's result can be exploited to prove the aforementioned results of Berg et al and also how it can be used to generalize these results, replacing $2$ by $2d$. Special attention is paid to the polynomial case, i.e., the case where the semigroup in question is $(\mathbb{N}^n,+)$; see Section \ref{main case}. In Section \ref{appendix}, we explain how the simple proof of Jacobi's result in the case $d=1$ given in \cite[Theorem 5.4.4]{M} can be extended to the case $d>1$.

\section{background}\label{background}

Let $A$ be a commutative ring with $1$. For simplicity assume $\mathbb{Q}\subseteq A$. Denote by $X_A$ the set of all (unitary) ring homomorphisms $\alpha : A \rightarrow \mathbb{R}$. For $a\in A$, define $\hat{a} : X_A \rightarrow \mathbb{R}$ by $\hat{a}(\alpha) = \alpha(a)$. Give $X_A$ the weakest topology making each $\hat{a}$, $a\in A$ continuous. We have a ring homomorphism $\hat{} : A \rightarrow \operatorname{Cont}(X_A,\mathbb{R})$ defined by $a \mapsto \hat{a}$.\footnote{ We will abuse the notation occasionally, denoting $\hat{a}(\alpha)$ by $a(\alpha)$ and $\hat{a}$ by $a$.}

\begin{example} If $A = \mathbb{R}[\underline{X}]$, $X_A$ is naturally identified with $\mathbb{R}^n$ via $\alpha \leftrightarrow \underline{x} :=(\alpha(X_1),\dots,\alpha(X_n))$, and $\hat{a}(\alpha) = a(\underline{x})$, i.e., $\hat{a}$ is the polynomial function on $\mathbb{R}^n$ associated to the polynomial $a$. One uses here the fact that the only ring homomorphism from $\mathbb{R}$ to $\mathbb{R}$ is the identity map; see \cite[Proposition 5.4.5]{M}.
\end{example}

We will be interested in the map $a \mapsto \hat{a}|_{\mathcal{K}}$ from $A$ to $\operatorname{Cont}(\mathcal{K},\mathbb{R})$, where $\mathcal{K}$ is a subset of $X_A$. We record the following result:

\begin{theorem}\label{haviland,etc} Suppose $A$ is an $\mathbb{R}$-algebra and $\mathcal{K}$ is a compact subset of $X_A$. Then
\begin{enumerate}
\item The image of $A$ in $\operatorname{Cont}(\mathcal{K},\mathbb{R})$ is dense in the topology induced by the sup norm $\| \phi\| := \sup\{ |\phi(\alpha)| \mid \alpha \in \mathcal{K}\}$.
\item If $L : A \rightarrow \mathbb{R}$ is an $\mathbb{R}$-linear map satisfying $L(\operatorname{Pos}(\mathcal{K})) \subseteq \mathbb{R}_{\ge 0}$ then there exists a unique positive borel measure $\mu$ on $\mathcal{K}$ such that $\forall$ $a\in A$, $L(a)= \int \hat{a} d\mu$.
\end{enumerate}
\end{theorem}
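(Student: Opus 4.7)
The plan is to prove part (1) as a direct application of the Stone--Weierstrass theorem, then combine (1) with the Riesz representation theorem to deduce (2) in the classical Haviland style.

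For (1), the image $\hat{A}|_{\mathcal{K}}$ is a unital $\mathbb{R}$-subalgebra of $\operatorname{Cont}(\mathcal{K},\mathbb{R})$: it contains the constants because $A$ is an $\mathbb{R}$-algebra and every ring homomorphism $\mathbb{R}\to\mathbb{R}$ is the identity, so each real $r\in A$ maps to the constant function $r$ on $\mathcal{K}$. It separates the points of $\mathcal{K}$, since two distinct homomorphisms $\alpha,\beta\in\mathcal{K}\subseteq X_A$ must disagree on some $a\in A$, giving $\hat{a}(\alpha)\neq\hat{a}(\beta)$. Because $\mathcal{K}$ is compact Hausdorff, the real Stone--Weierstrass theorem yields sup-norm density.

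For (2), I would first check that $L$ descends to a well-defined $\mathbb{R}$-linear functional $\tilde{L}$ on $\hat{A}|_{\mathcal{K}}$ via $\tilde{L}(\hat{a}|_{\mathcal{K}}):=L(a)$: if $\hat{a}|_{\mathcal{K}}=\hat{b}|_{\mathcal{K}}$ then both $\pm(a-b)$ lie in $\operatorname{Pos}(\mathcal{K})$, forcing $L(a)=L(b)$. The functional $\tilde{L}$ is positive by hypothesis, and it is bounded with $|\tilde{L}(\phi)|\le L(1)\|\phi\|$ for every $\phi\in\hat{A}|_{\mathcal{K}}$, because $\|\phi\|\pm\phi$ are non-negative elements of $\hat{A}|_{\mathcal{K}}$. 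By the density from (1), $\tilde{L}$ extends uniquely to a continuous linear functional on $\operatorname{Cont}(\mathcal{K},\mathbb{R})$. This extension is still positive on non-negative continuous functions: given $f\ge 0$ and $\epsilon>0$, choose $\phi\in\hat{A}|_{\mathcal{K}}$ with $\|(f+\epsilon)-\phi\|<\epsilon/2$, so that $\phi>0$ on $\mathcal{K}$ and thus $\tilde{L}(\phi)\ge 0$; letting $\epsilon\to 0$ yields $\tilde{L}(f)\ge 0$.

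The Riesz representation theorem then furnishes a unique positive Borel measure $\mu$ on $\mathcal{K}$ with $\tilde{L}(\phi)=\int\phi\,d\mu$ for all $\phi\in\operatorname{Cont}(\mathcal{K},\mathbb{R})$; restricting to $\hat{A}|_{\mathcal{K}}$ gives $L(a)=\int\hat{a}\,d\mu$. Uniqueness of $\mu$ follows from density of $\hat{A}|_{\mathcal{K}}$ in $\operatorname{Cont}(\mathcal{K},\mathbb{R})$ together with the uniqueness clause in Riesz. The main obstacle in this plan is preserving positivity when extending $\tilde{L}$ from $\hat{A}|_{\mathcal{K}}$ to the full space: a sup-norm approximant to $f\ge 0$ need not itself be non-negative, which is precisely why the preliminary shift by a small positive $\epsilon$ is required before invoking density.
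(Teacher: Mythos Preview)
Your proof is correct and follows essentially the same route as the paper: Stone--Weierstrass for (1), then for (2) descend $L$ to the image $\hat{A}|_{\mathcal{K}}$, extend to all of $\operatorname{Cont}(\mathcal{K},\mathbb{R})$, and apply the Riesz representation theorem. The only cosmetic difference is that the paper cites the Hahn--Banach theorem to produce the positive extension in one stroke, whereas you bound $\tilde{L}$ directly via $|\tilde{L}(\phi)|\le L(1)\|\phi\|$, extend by density, and then verify positivity with the $\epsilon$-shift; both arguments are standard and interchangeable in this setting.
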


Here, $\operatorname{Pos}(\mathcal{K}) := \{ a \in A \mid \hat{a} \ge 0 \text{ on } \mathcal{K}\}$.

\begin{proof}
(1) This is immediate from the Stone-Weierstrass Approximation Theorem.
(2) $L$ vanishes on the kernel of $\hat{}\,|_{\mathcal{K}}$ so $L$ induces an $\mathbb{R}$-linear map $L' : A' \rightarrow \mathbb{R}$, where $A'$ is the image of $A$ under $\hat{}\,|_{\mathcal{K}}$. An application of the Hahn-Banach Theorem shows that $L'$ extends to a positive $\mathbb{R}$-linear map $L'' : \operatorname{Cont}(\mathcal{K}, \mathbb{R}) \rightarrow \mathbb{R}$. The density of $A'$ in $\operatorname{Cont}(\mathcal{K},\mathbb{R})$ implies the extension $L''$ of $L'$ is unique. The existence and uniqueness of $\mu$ now follows, using the Riesz Representation Theorem.
\end{proof}

\begin{rem} (i) Theorem \ref{haviland,etc}(2) is well-known. It is implicit for example in the proof of \cite[Th\'eor\`eme 14]{Kr}. In the special case $A=\mathbb{R}[\underline{X}]$ it is a consequence of Haviland's Theorem; see \cite{Hav1} \cite{Hav2}.  See \cite[Theorem 3.2.2]{M} for a general result which includes Theorem \ref{haviland,etc}(2) and Haviland's Theorem as special cases.
(ii) Observe that the converse of Theorem \ref{haviland,etc}(2) holds trivially: If $L(a) = \int \hat{a} \, d\mu$ for all $a\in A$, where $\mu$ is a positive borel measure on $\mathcal{K}$, then $L(a)\ge 0$ for all $a \in \operatorname{Pos}(\mathcal{K})$.
\end{rem}

We recall some basic terminology. A \it preprime \rm of $A$ is a subset $P$ of $A$ satisfying $$P+P\subseteq P, \ P\cdot P \subseteq P, \text{ and } \mathbb{Q}_{\ge 0} \subseteq P.$$ $P$ is said to be \it generating \rm if $P-P=A$. If there exists a positive integer $d$ such that $a^{2d}\in P$ for all $a\in A$, then $P$ is called a \it preordering, \rm more precisely, a \it preordering of exponent $2d$.\footnote{Preorderings of odd exponent are not interesting. If $P$ is a preordering of odd exponent then $P=P-P = A$.} \rm Denote by $\sum A^{2d}$ the set of all finite sums of $2d$-powers of elements of $A$.  $\sum A^{2d}$ is the unique smallest preordering of $A$ of exponent $2d$. The polynomial identity $$n!X = \sum_{h=0}^{n-1} (-1)^{n-1-h} \begin{pmatrix} n-1 \\ h\end{pmatrix} [(X+h)^n-h^n],$$ see \cite[Page 325]{HW}, applied with $n=2d$,  shows that any preordering is generating.

A subset $M$ of $A$ is called a \it $P$-module \rm if $$M+M \subseteq M, \ P\cdot M\subseteq M, \text{ and } 1\in M.$$
$M$ is said to be \it archimedean \rm if $\forall$ $a\in A$ $\exists$ $n\in \mathbb{N}$ such that $n+a\in M$.
The \it non-negativity set of $M$ in $X_A$ \rm is the subset $\mathcal{K}_M$ of $X_A$ defined by $$\mathcal{K}_M := \{ \alpha \in X_A \mid \alpha(M) \subseteq \mathbb{R}_{\ge 0}\} = \{ \alpha \in X_A \mid \hat{a}\ge 0 \text{ at } \alpha \ \forall a \in M\}.$$

\begin{rem} If $M$ is archimedean then $\mathcal{K}_M$ is compact. This is well-known. For each $a\in A$ $\exists$ $n_a\in \mathbb{N}$ such that $n_a\pm a \in M$. The map $\alpha \mapsto (\alpha(a))_{a\in A}$ identifies $\mathcal{K}_M$ with a closed subset of the compact space $\prod_{a\in A} [-n_a,n_a]$.
\end{rem}

\begin{theorem}[Jacobi] \label{jacobi theorem} Suppose $M \subseteq A$ is an archimedean $\sum A^{2d}$-module of $A$ for some integer $d\ge 1$. Then, $\forall$ $a\in A$, $$\hat{a}>0 \text{ on } \mathcal{K}_M \ \Rightarrow \ a\in M.$$
\end{theorem}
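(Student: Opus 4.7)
The plan is to argue by contradiction: assume $a \notin M$ while $\hat{a} > 0$ on $\mathcal{K}_M$, and produce a ring homomorphism $\alpha \in \mathcal{K}_M$ with $\alpha(a) \le 0$, directly contradicting the hypothesis. This character-producing route, which mirrors the strategy of \cite[Theorem 5.4.4]{M} for the case $d=1$, is cleaner than a measure-theoretic route via Theorem \ref{haviland,etc}(2), as it sidesteps the need to promote a Hahn-Banach separating functional to one positive on all of $\operatorname{Pos}(\mathcal{K}_M)$ by independent means.

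The construction of $\alpha$ proceeds in two stages. First, apply Zorn's lemma to the poset of $\sum A^{2d}$-modules $N$ with $M \subseteq N$ and $a \notin N$, ordered by inclusion: the poset is nonempty and unions of chains remain $\sum A^{2d}$-modules missing $a$, so a maximal element $N$ exists, and $N$ automatically inherits the archimedean property. The decisive structural claim about $N$ is the \emph{dichotomy}: for every $b \in A$, either $b \in N$ or $-b \in N$. Granted the dichotomy, $I := N \cap (-N)$ is an ideal of $A$, the quotient $A/I$ is totally ordered by the image of $N$, and archimedeanness descends to $A/I$. A totally ordered commutative ring has no zero divisors, so $A/I$ is an archimedean totally ordered integral $\mathbb{Q}$-algebra, and the standard fact that any such object embeds into $\mathbb{R}$ as an ordered ring yields a ring homomorphism $\alpha : A \to \mathbb{R}$ with $\alpha(N) \subseteq \mathbb{R}_{\ge 0}$. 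Then $\alpha \in \mathcal{K}_M$ while $\alpha(a) \le 0$ (using the dichotomy, $-a \in N$ so $\alpha(-a) \ge 0$), giving the required contradiction with $\hat{a}(\alpha) > 0$.

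The main obstacle is establishing the dichotomy. In the classical $d=1$ setting, from $b \notin N$ and $-b \notin N$ one uses maximality of $N$ to obtain relations $a = n_+ + s_+ b$ and $a = n_- - s_- b$ with $n_\pm \in N$ and $s_\pm \in \sum A^2$, and then eliminates $b$ by a suitable combination of these relations to force $a \in N$; this elimination relies crucially on $\sum A^2$ being closed under multiplication and on $b^2 \in \sum A^2$. For $d > 1$ the second feature fails -- $b^2$ need not lie in $\sum A^{2d}$ -- so the elimination must be rerouted. The appendix will use the polynomial identity $(2d)! \, X = \sum_{h=0}^{2d-1} (-1)^{2d-1-h} \binom{2d-1}{h}\bigl[(X+h)^{2d} - h^{2d}\bigr]$ recorded in the introduction, which expresses $X$ as an integer combination of $2d$-th powers, together with the archimedean property of $M$ to absorb the unwanted integer coefficients and the $h^{2d}$ error terms. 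Adapting this combinatorial elimination so that it respects only the weaker $\sum A^{2d}$-module structure is the single nontrivial step beyond the classical proof.
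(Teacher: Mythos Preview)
Your architecture differs from the paper's, and the central step---the dichotomy for a module maximal with respect to avoiding $a$---does not go through as you describe. From $a=n_{+}+s_{+}b$ and $a=n_{-}-s_{-}b$ one can extract, say, $(s_{+}+s_{-})a=s_{-}n_{+}+s_{+}n_{-}\in N$ or $(a-n_{+})(n_{-}-a)=s_{+}s_{-}b^{2}\in N$, but neither forces $a\in N$: $N$ is only a $\sum A^{2d}$-module, not multiplicatively closed, and there is no mechanism to cancel the prefactor $s_{+}+s_{-}$. The standard dichotomy is proved for archimedean modules maximal with respect to \emph{excluding $-1$}, not a general $a$, and even in that setting the $d=1$ argument is more delicate than a one-line elimination. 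Your speculation that the appendix handles $d>1$ via the identity $(2d)!\,X=\sum_{h}(-1)^{2d-1-h}\binom{2d-1}{h}\bigl[(X+h)^{2d}-h^{2d}\bigr]$ is also off: that identity appears in Section~\ref{background} only to record that $\sum A^{2d}$ is generating, and plays no role in any elimination.

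The paper's proof (Section~\ref{appendix}) takes a different route and is not a contradiction argument producing a bad character. One sets $M_{1}:=M-a\cdot\sum A^{2d}$; since $\hat a>0$ on $\mathcal{K}_{M}$ one has $\mathcal{K}_{M_{1}}=\emptyset$, and \cite[Corollary~5.4.1]{M} (valid for archimedean modules over generating preprimes) gives $-1\in M_{1}$, i.e., $at-1\in M$ for some $t\in\sum A^{2d}$. The new content for $d>1$ is entirely in Lemma~\ref{basic trick}, a direct binomial computation showing that for all large rational $k$ and all $r\in\mathbb{Q}_{\ge 0}$,
\[
k^{2d}(a+r)-1-(k-t)^{2d}(a+r)\in M.
\]
Dividing by $k^{2d}$ yields the implication $a+r\in M\Rightarrow a+r-\tfrac{1}{k^{2d}}\in M$; iterating drives $r$ below zero, whence $a=(a+r)+(-r)\in M$. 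So the ``single nontrivial step beyond the classical proof'' is Lemma~\ref{basic trick}, not a combinatorial rerouting of a dichotomy argument.
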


\begin{proof} See \cite[Theorem 4]{J}.
\end{proof}

See \cite[Hauptsatz]{BS} and \cite[Th\'eor\`eme 12]{Kr} for early variants of Theorem \ref{jacobi theorem}. See \cite[Theorem 6.2]{BSS} and \cite[Theorem 5.4.4]{M}  for other proofs of Theorem \ref{jacobi theorem} in the case $d=1$. See Section \ref{appendix} for the extension of the proof in \cite[Theorem 5.4.4]{M} to the case $d>1$. See \cite[Theorem 2.3]{M0} for an extension of Theorem \ref{jacobi theorem}.

\begin{cor} \label{cor of jacobi theorem} Suppose $A$ is an $\mathbb{R}$-algebra and $M \subseteq A$ is an archimedean $\sum A^{2d}$-module of $A$ for some integer $d\ge 1$. If $L : A \rightarrow \mathbb{R}$ is an $\mathbb{R}$-linear map satisfying $L(M) \subseteq \mathbb{R}_{\ge 0}$ then there exists a unique positive borel measure $\mu$ on $\mathcal{K}_M$ such that $\forall$ $a\in A$ $L(a)= \int \hat{a} d\mu$.
\end{cor}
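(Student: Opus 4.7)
The plan is to deduce the corollary by combining Jacobi's theorem (Theorem \ref{jacobi theorem}) with Theorem \ref{haviland,etc}(2). The key observation is that Theorem \ref{haviland,etc}(2) already gives the desired measure provided one knows $L(\operatorname{Pos}(\mathcal{K}_M)) \subseteq \mathbb{R}_{\ge 0}$, so the whole task reduces to upgrading the hypothesis $L(M) \subseteq \mathbb{R}_{\ge 0}$ to $L(\operatorname{Pos}(\mathcal{K}_M)) \subseteq \mathbb{R}_{\ge 0}$.

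First I would note that the archimedean hypothesis on $M$ makes $\mathcal{K}_M$ a compact subset of $X_A$ (by the remark preceding Theorem \ref{jacobi theorem}), so Theorem \ref{haviland,etc} is applicable with $\mathcal{K} = \mathcal{K}_M$. Second, since $1 \in M$, we have $L(1) \ge 0$.

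Next I would take an arbitrary $a \in \operatorname{Pos}(\mathcal{K}_M)$, so $\hat{a} \ge 0$ on $\mathcal{K}_M$. For every real $\epsilon > 0$, the element $a + \epsilon \in A$ (using that $A$ is an $\mathbb{R}$-algebra) satisfies $\widehat{a+\epsilon} = \hat{a} + \epsilon > 0$ on $\mathcal{K}_M$. Jacobi's theorem therefore gives $a + \epsilon \in M$, hence $L(a) + \epsilon L(1) = L(a+\epsilon) \ge 0$. Letting $\epsilon \to 0^+$ yields $L(a) \ge 0$, so $L(\operatorname{Pos}(\mathcal{K}_M)) \subseteq \mathbb{R}_{\ge 0}$.

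Finally, apply Theorem \ref{haviland,etc}(2) to $L$ with $\mathcal{K} = \mathcal{K}_M$ to produce the unique positive Borel measure $\mu$ on $\mathcal{K}_M$ representing $L$. There is no real obstacle here beyond noticing that Jacobi's theorem is essentially designed to make this kind of ``strict positivity implies membership in $M$'' argument work; the only care needed is the standard $\epsilon$-perturbation to pass from $\hat{a} \ge 0$ to $\hat{a} + \epsilon > 0$, together with verifying that $\mathcal{K}_M$ is compact so that Theorem \ref{haviland,etc}(2) applies.
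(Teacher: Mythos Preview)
Your proposal is correct and follows essentially the same approach as the paper's proof: use Jacobi's theorem on $a+\epsilon$ to show $L(\operatorname{Pos}(\mathcal{K}_M))\subseteq\mathbb{R}_{\ge 0}$, then invoke Theorem~\ref{haviland,etc}(2). The only differences are cosmetic---you make the compactness of $\mathcal{K}_M$ and the inequality $L(1)\ge 0$ explicit, which the paper leaves implicit.
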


\begin{proof} Suppose $a\in A$, $a\ge 0$ on $\mathcal{K}_M$, and $\epsilon\in \mathbb{R}$, $\epsilon>0$. Then $a+\epsilon >0$ on $\mathcal{K}_M$ so, by Jacobi's theorem, $a+\epsilon \in M$. Then $L(a+\epsilon)= L(a)+\epsilon L(1) \ge 0$. Since $\epsilon>0$ is arbitrary, this implies $L(a)\ge 0$. This proves $L(\operatorname{Pos}(\mathcal{K}_M)) \subseteq \mathbb{R}_{\ge 0}$, so the result follows now, by Theorem \ref{haviland,etc}(2).
\end{proof}

By a \it topological $\mathbb{R}$-vector space \rm we mean an $\mathbb{R}$-vector space $V$ equipped with a topology such that the addition and scalar multiplcation are continuous. There is no requirement that the topology be Hausdorff.
We are interested here in the case where the topology is defined by a seminorm. Such a topology is in particular locally convex.
We record a version of the Hahn-Banach Separation Theorem.

\begin{theorem}\label{hahn banach separation} Suppose $V$ is a topological $\mathbb{R}$-vector space, $A$ and $B$ are non-empty disjoint convex subsets of $V$ and $A$ is open in $V$. Then there exists a continuous linear map $L : V \rightarrow \mathbb{R}$ and $t \in \mathbb{R}$ such that $L(a)<t \le L(b)$ for all $a\in A$ and for all $b\in B$. If $B$ is a cone we can choose $t=0$.
\end{theorem}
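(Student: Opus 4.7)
The plan is to reduce the two-set separation to a single-set separation from the origin, and then to invoke the algebraic Hahn-Banach extension theorem applied to the Minkowski functional of a suitable open convex neighborhood of $0$. First I would form the difference set $C := A-B = \{a-b : a\in A, b\in B\}$. Because $A$ is open, $C = \bigcup_{b\in B}(A-b)$ is a union of open translates of $A$, hence open; convexity of $A$ and $B$ implies $C$ is convex; and disjointness of $A$ and $B$ means $0 \notin C$. It therefore suffices to produce a continuous linear $L : V \to \mathbb{R}$ with $L(c) > 0$ on $C$, since this unpacks to $L(a) > L(b)$ for all $a\in A$, $b\in B$ (from which the desired inequality follows after replacing $L$ by $-L$).

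Next I would fix any $c_0 \in C$ and set $D := c_0 - C$, an open convex set containing $0$ but not $c_0$. Let $p$ be the Minkowski functional of $D$. Standard facts give that $p$ is sublinear, that $D = \{v : p(v) < 1\}$ (using openness of $D$), and that $p(c_0) \geq 1$. On the one-dimensional subspace $\mathbb{R}c_0$, define $f(tc_0) := t$; a case distinction on the sign of $t$ shows $f \leq p$ there. The algebraic Hahn-Banach extension theorem then produces a linear $\tilde L : V \to \mathbb{R}$ with $\tilde L \leq p$ globally and $\tilde L(c_0) = 1$. For $c \in C$, writing $c = c_0 - v$ with $v \in D$, one obtains $\tilde L(c) = 1 - \tilde L(v) \geq 1 - p(v) > 0$.

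For continuity, since $\tilde L \leq p < 1$ on the open neighborhood $D$ of $0$, $\tilde L$ is bounded above on a neighborhood of $0$, and a linear functional with this property on a topological vector space is automatically continuous. Setting $L := -\tilde L$ and $t := \inf_{b \in B} L(b)$, one has $L(a) \leq t$ for all $a \in A$; the strict inequality $L(a) < t$ follows because a non-zero continuous linear map $V \to \mathbb{R}$ is open, so $L(A)$ is an open subset of $(-\infty, t]$, hence contained in $(-\infty, t)$. For the cone case, $0 \in B$ forces $t \leq 0$, and for each $b \in B$ the ray $\{\lambda b : \lambda > 0\} \subseteq B$ forces $L(b) \geq 0$ by letting $\lambda \to 0^+$ in $\lambda L(b) \geq t$; hence $L(a) < 0 \leq L(b)$, and one may replace $t$ by $0$.

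The main obstacle I anticipate is the passage from the purely algebraic Hahn-Banach theorem to its topological counterpart: one must locate an open neighborhood of $0$ on which the linear extension is a priori bounded above, which is exactly what openness of $A$, and hence of $D$, supplies through the Minkowski functional. A secondary subtlety is promoting the weak inequality $L(a) \leq t$ on $A$ to the strict one $L(a) < t$; this is handled by the openness of continuous linear surjections onto $\mathbb{R}$.
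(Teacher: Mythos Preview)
The paper does not give a proof of this theorem; it simply cites \cite[Theorem 7.3.2]{Jar} for the first assertion and \cite[Theorem 2.4]{GKS} for the second. Your argument, by contrast, is a self-contained standard proof via the Minkowski functional, and it is essentially correct. Since the paper offers nothing to compare against beyond the citations, the main point is that your write-up would supply what the paper outsources.

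One small slip in the cone paragraph: from $\lambda L(b) \ge t$ for all $\lambda>0$ you want to conclude $L(b)\ge 0$, but letting $\lambda\to 0^{+}$ only gives $0\ge t$ (which you had already obtained from $0\in B$). The inequality $L(b)\ge 0$ follows instead by letting $\lambda\to\infty$: if $L(b)<0$ then $\lambda L(b)\to -\infty$, contradicting the lower bound $t$. With that correction (and using $t\le 0$), you indeed get $L(a)<t\le 0\le L(b)$, so $t$ may be replaced by $0$. You might also note that the step ``$0\in B$'' presumes cones contain the origin; if that is not part of the definition in play, the $\lambda\to 0^{+}$ limit already delivers $t\le 0$ without that assumption.
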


\begin{proof}
See \cite[Theorem 7.3.2]{Jar} for the proof of the first assertion, \cite[Theorem 2.4]{GKS} for the proof of the second assertion.
\end{proof}

\begin{cor}\label{hahn banach corollary} Suppose $V$ is a locally convex topological $\mathbb{R}$-vector space and $C$ is a cone in $V$. The closure of $C$ in $V$ consists of all $v\in V$ satisfying $L(v)\ge 0$ for all continuous linear maps $L : V \rightarrow \mathbb{R}$ such that $L\ge 0$ on $C$.
\end{cor}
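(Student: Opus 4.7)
The proof splits into the two containments, with the nontrivial direction being an almost immediate application of Theorem \ref{hahn banach separation}.

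For the easy direction, suppose $v \in \overline{C}$ and $L : V \to \mathbb{R}$ is continuous with $L \ge 0$ on $C$. Pick a net $(c_\lambda)$ in $C$ with $c_\lambda \to v$. Then $L(c_\lambda) \ge 0$ for every $\lambda$, and $L(c_\lambda) \to L(v)$ by continuity, so $L(v) \ge 0$. Hence every $v \in \overline{C}$ lies in the set described on the right.

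For the reverse inclusion I would argue by contrapositive: assume $v \notin \overline{C}$ and produce a continuous linear $L$ with $L \ge 0$ on $C$ but $L(v) < 0$. Since $V$ is locally convex and $V \setminus \overline{C}$ is an open neighborhood of $v$, there is an open convex neighborhood $U$ of $v$ with $U \cap \overline{C} = \emptyset$, and in particular $U \cap C = \emptyset$. Now apply Theorem \ref{hahn banach separation} with $A := U$ and $B := C$: $A$ is non-empty, open, and convex, $B$ is non-empty and convex, $A \cap B = \emptyset$, and $B$ is a cone. The theorem yields a continuous linear map $L : V \to \mathbb{R}$ satisfying
\[
L(u) < 0 \le L(c) \quad \text{for all } u \in U,\ c \in C.
\]
Taking $u = v \in U$ gives $L(v) < 0$, while $L \ge 0$ on $C$. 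This is exactly what is needed.

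There is essentially no obstacle here; the only subtlety is invoking local convexity to obtain the open convex neighborhood $U$ of $v$ disjoint from $\overline{C}$, which is precisely what lets us meet the hypotheses of Theorem \ref{hahn banach separation}, and using the cone clause to get $t = 0$ in the separating inequality.
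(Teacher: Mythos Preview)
Your proof is correct and is exactly the unpacking of what the paper means by ``Immediate from Theorem \ref{hahn banach separation}'': one containment by continuity, the other by separating an open convex neighborhood of $v$ from the cone $C$ using the $t=0$ clause. There is nothing to add.
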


\begin{proof} Immediate from Theorem \ref{hahn banach separation}.
\end{proof}

\section{polynomial case}\label{main case}

Throughout, $\mathbb{N} := \{ 0,1, \dots\}$, $n$ denotes a fixed positive integer, $\underline{X}$ denotes the $n$-tuple of variables $X_1,\dots,X_n$, and $\underline{X}^s := X_1^{s_1}\dots X_n^{s_n}$, for $s = (s_1,\dots,s_n) \in \mathbb{N}^n$.

For any function $\phi : \mathbb{N}^n \rightarrow \mathbb{R}_{\ge 0}$, we define $$\mathcal{K}_{\phi} := \{ \underline{x}\in \mathbb{R}^n \mid |\underline{x}^s| \le \phi(s) \ \forall \ s\in \mathbb{N}^n \}.$$
Fix an integer $d\ge 1$. We denote by $M_{\phi,2d}$ the $\sum \mathbb{R}[\underline{X}]^{2d}$-module of $\mathbb{R}[\underline{X}]$ generated by the elements $\phi(s)\pm \underline{X}^s$, $s \in \mathbb{N}^n$.
$M_{\phi,2d}$ is archimedean. This is a consequence of the fact that $$\sum_s |f_s|\phi(s)+ f =\sum_{f_s>0} |f_s|(\phi(s)+\underline{X}^s)+\sum_{f_s<0} |f_s|(\phi(s)-\underline{X}^s)\in M_{\phi,2d},$$ for any $f = \sum_s f_s\underline{X}^s \in \mathbb{R}[\underline{X}]$.
Also, $\mathcal{K}_{\phi}$ is the non-negativity set of $M_{\phi,2d}$ in $\mathbb{R}^n$ so, by Jacobi's theorem, any $f \in \mathbb{R}[\underline{X}]$ strictly positive on $\mathcal{K}_{\phi}$ belongs to $M_{\phi,2d}$.\footnote{If one insists on $\mathcal{K}_{\phi} \ne \emptyset$ (equivalently, $-1 \notin M_{\phi,2d}$) it is necessary to assume $\phi(0) \ge 1$.}

\begin{definition} A function $\phi : \mathbb{N}^n \rightarrow \mathbb{R}_{\ge 0}$ is called an \it absolute value \rm if
\begin{enumerate}
\item $\phi(0)\ge 1$,
\item $\phi(s+t) \le \phi(s)\phi(t)$ $\forall$  $s,t \in \mathbb{N}^n$.
\end{enumerate}
\end{definition}

Suppose now that $\phi$ is an absolute value.
Denote by $\mathbb{R}[[\underline{X}]]$ the ring of formal power series in $X_1,\dots,X_n$ with coefficients in $\mathbb{R}$. For $f = \sum_s  f_s \underline{X}^s \in \mathbb{R}[[\underline{X}]]$ define the \it $\phi$-seminorm \rm of $f$ to be $\| f \|_{\phi} := \sum_s |f_s|\phi(s)$ and denote by $\mathbb{R}[[\underline{X}]]_{\phi}$ the subset of $\mathbb{R}[[\underline{X}]]$ consisting of all $f \in \mathbb{R}[[\underline{X}]]$ having finite $\phi$-seminorm.
Using
$$\| f+g\|_{\phi} \le \| f\|_{\phi} +\| g\|_{\phi}, \ \|rf\|_{\phi} = |r|\|f\|_{\phi} \text{ and }
\| fg\|_{\phi} \le \| f\|_{\phi} \| g\|_{\phi}$$ (these are easily verified),
we see that $\mathbb{R}[[\underline{X}]]_{\phi}$ is a subalgebra of the $\mathbb{R}$-algebra $\mathbb{R}[[\underline{X}]]$. It is the closure of $\mathbb{R}[\underline{X}]$ in the topology induced by the $\phi$-seminorm.

\begin{lemma} \label{sqrt} Suppose  $r \in \mathbb{R}$, $s \in \mathbb{N}^n$,
$r>\phi(s)$. Then $(r\pm \underline{X}^s)^{1/2d} \in \mathbb{R}[[\underline{X}]]_{\phi}$.
\end{lemma}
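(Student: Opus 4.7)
The plan is to write $(r \pm \underline{X}^s)^{1/2d}$ explicitly as a formal power series via the generalized binomial expansion, and then bound its $\phi$-seminorm using the submultiplicativity of $\phi$.

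More concretely, I would first factor out $r$ and write
\[
(r \pm \underline{X}^s)^{1/2d} \;=\; r^{1/2d}\Bigl(1 \pm \tfrac{1}{r}\underline{X}^s\Bigr)^{1/2d} \;=\; r^{1/2d}\sum_{k=0}^{\infty}\binom{1/2d}{k}(\pm 1)^k r^{-k}\, \underline{X}^{ks}.
\]
Since $\underline{X}^s$ has zero constant term, the substitution into the binomial series makes sense coefficient by coefficient, and the resulting formal power series is easily checked to be a $2d$-th root of $r \pm \underline{X}^s$ in $\mathbb{R}[[\underline{X}]]$.

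Next, I would estimate the $\phi$-seminorm. Applying the definition termwise and using the submultiplicativity $\phi(ks) \le \phi(s)^k$ (which follows by induction from property (2) of an absolute value), one gets
\[
\bigl\| (r \pm \underline{X}^s)^{1/2d} \bigr\|_{\phi} \;\le\; r^{1/2d}\sum_{k=0}^{\infty}\left|\binom{1/2d}{k}\right| r^{-k}\phi(s)^k \;=\; r^{1/2d}\sum_{k=0}^{\infty}\left|\binom{1/2d}{k}\right|\Bigl(\tfrac{\phi(s)}{r}\Bigr)^{k}.
\]
The real power series $\sum_{k\ge 0}\binom{1/2d}{k}y^k$ has radius of convergence $1$ (by the ratio test), so it converges absolutely for all real $y$ with $|y|<1$. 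Our hypothesis $r>\phi(s)$ gives $\phi(s)/r<1$, so the last sum is finite. Hence $(r \pm \underline{X}^s)^{1/2d} \in \mathbb{R}[[\underline{X}]]_{\phi}$.

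I do not expect a real obstacle here; the only point to be careful about is the justification of the termwise seminorm estimate, but this is immediate from the definition of $\|\cdot\|_{\phi}$ on $\mathbb{R}[[\underline{X}]]$ and the fact that the monomials $\underline{X}^{ks}$ occurring for different $k$ are distinct, so no cancellation or grouping of coefficients is needed.
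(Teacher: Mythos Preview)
Your argument is essentially the paper's own proof: both expand $(r\pm t)^{1/2d}$ as a power series about $t=0$, substitute $t=\underline{X}^s$, and bound the $\phi$-seminorm using $\phi(ks)\le\phi(s)^k$ together with absolute convergence inside the disk of radius $r$. One small caveat: your claims that $\underline{X}^s$ has zero constant term and that the monomials $\underline{X}^{ks}$ are distinct require $s\ne 0$; the paper disposes of the trivial case $s=0$ at the outset (there $r\pm\underline{X}^0=r\pm1>0$ is just a positive constant), and you should do the same.
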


\begin{proof} We may assume $s\ne 0$.
Denote by $\sum_{i=0}^{\infty} a_it^i$ the power series expansion of $f(t) = (r\pm t)^{1/2d}$ about $t=0$, i.e., $a_i = \frac{f^{(i)}(0)}{i!}$.
This has radius of convergence $r$ so it converges absolutely for $|t|<r$. In particular, it converges absolutely for $t=\phi(s)$, i.e., $\sum_{i=0}^{\infty} |a_i|\phi(s)^i <\infty$. Since $\phi(is)\le \phi(s)^i$ for $i\ge 1$, this implies $\sum_{i=0}^{\infty} |a_i|\phi(is) <\infty$, i.e., $(r\pm \underline{X}^s)^{1/2d} = \sum_{i=0}^{\infty} a_i\underline{X}^{is} \in \mathbb{R}[[\underline{X}]]_{\phi}$.
\end{proof}

An important example of an absolute value, perhaps the most important one, is the constant function $1$. If $\phi=1$ then $\mathcal{K}_{\phi} = [-1,1]^n$ and the $\phi$-seminorm is the standard $\ell_1$-norm $\| f \|_1 := \sum_s |f_s|$.

\begin{theorem}\label{new} Suppose $\phi$ is an absolute value on $\mathbb{N}^n$ 
and $f\in \mathbb{R}[\underline{X}]$, $f>0$ on $\mathcal{K}_{\phi}$. Then $f \in \sum \mathbb{R}[[\underline{X}]]_{\phi}^{2d}$.
\end{theorem}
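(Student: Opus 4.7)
The plan is to apply Jacobi's theorem (Theorem~\ref{jacobi theorem}) to a slightly enlarged archimedean $\sum\mathbb{R}[\underline{X}]^{2d}$-module of $\mathbb{R}[\underline{X}]$ and then to write each of its generators as a single $2d$-power in $\mathbb{R}[[\underline{X}]]_\phi$ using Lemma~\ref{sqrt}. Applying Jacobi directly to $M_{\phi,2d}$ would express $f$ in terms of generators $\phi(s)\pm\underline{X}^s$, but Lemma~\ref{sqrt} requires the strict inequality $r>\phi(s)$ and does not cover the boundary case $r=\phi(s)$. Enlarging the module provides the slack needed.

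For $\epsilon>0$, let $M_\epsilon\subseteq\mathbb{R}[\underline{X}]$ be the $\sum\mathbb{R}[\underline{X}]^{2d}$-module generated by $(\phi(s)+\epsilon)\pm\underline{X}^s$, $s\in\mathbb{N}^n$. The same telescoping identity used for $M_{\phi,2d}$ (with $\phi(s)$ replaced by $\phi(s)+\epsilon$) shows that $M_\epsilon$ is archimedean; its non-negativity set $\mathcal{K}_{M_\epsilon}=\{\underline{x}\in\mathbb{R}^n:|\underline{x}^s|\le\phi(s)+\epsilon\text{ for all }s\}$ is compact and decreases as $\epsilon\downarrow0$ to $\mathcal{K}_\phi$. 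Since $f$ is continuous and strictly positive on $\mathcal{K}_\phi$, a standard finite-intersection-property argument applied to the decreasing family of compact sets $\{f\le 0\}\cap\mathcal{K}_{M_\epsilon}$ yields some $\epsilon_0>0$ with $f>0$ on $\mathcal{K}_{M_{\epsilon_0}}$.

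Theorem~\ref{jacobi theorem} then gives $f\in M_{\epsilon_0}$, so
\[
f = \tau_0 + \sum_i \tau_i\bigl((\phi(s_i)+\epsilon_0)+\delta_i\underline{X}^{s_i}\bigr),
\]
with $\tau_i\in\sum\mathbb{R}[\underline{X}]^{2d}$, $s_i\in\mathbb{N}^n$, $\delta_i\in\{+1,-1\}$. Since $\phi(s_i)+\epsilon_0>\phi(s_i)$, Lemma~\ref{sqrt} supplies $h_i:=\bigl((\phi(s_i)+\epsilon_0)+\delta_i\underline{X}^{s_i}\bigr)^{1/2d}\in\mathbb{R}[[\underline{X}]]_\phi$, so each generator equals the $2d$-power $h_i^{2d}$. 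Writing $\tau_i=\sum_k g_{ik}^{2d}$ with $g_{ik}\in\mathbb{R}[\underline{X}]\subseteq\mathbb{R}[[\underline{X}]]_\phi$, the identity $\tau_i h_i^{2d}=\sum_k(g_{ik}h_i)^{2d}$ shows each term lies in $\sum\mathbb{R}[[\underline{X}]]_\phi^{2d}$, and combining with $\tau_0\in\sum\mathbb{R}[\underline{X}]^{2d}$ exhibits $f$ as an element of $\sum\mathbb{R}[[\underline{X}]]_\phi^{2d}$.

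The main obstacle is the mismatch between Jacobi (whose conclusion naturally produces generators $\phi(s)\pm\underline{X}^s$ satisfying only a weak inequality) and Lemma~\ref{sqrt} (whose proof rests on absolute convergence strictly inside the disc of convergence and so needs a strict inequality). The enlargement by $\epsilon$, together with the soft compactness argument showing that the strict positivity of $f$ persists on a slight enlargement of $\mathcal{K}_\phi$, is precisely what bridges that gap.
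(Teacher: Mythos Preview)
Your proof is correct and follows essentially the same strategy as the paper: enlarge $\phi$ by a small $\epsilon>0$ (the paper writes $\delta$), use compactness to find $\epsilon_0$ with $f>0$ on $\mathcal{K}_{\phi+\epsilon_0}$, apply Jacobi to the archimedean module $M_{\phi+\epsilon_0,2d}$, and invoke Lemma~\ref{sqrt} with the strict inequality $\phi(s)+\epsilon_0>\phi(s)$. Your write-up is in fact more explicit than the paper's in spelling out both the compactness argument and the final step showing $\tau_i h_i^{2d}\in\sum\mathbb{R}[[\underline{X}]]_\phi^{2d}$.
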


\begin{proof} For each real $\delta>0$ consider the function $\phi+\delta : \mathbb{N}^n \rightarrow \mathbb{R}_{\ge 0}$ defined by $$(\phi+\delta)(s) := \phi(s)+\delta.$$ Since $\cap_{\delta>0} \mathcal{K}_{\phi+\delta} = \mathcal{K}_{\phi}$, each $\mathcal{K}_{\phi+\delta}$ is compact and $f>0$ on $\mathcal{K}_{\phi}$, $\exists$ $\delta >0$ such that $f >0$ on $\mathcal{K}_{\phi+\delta}$.
The $\sum \mathbb{R}[\underline{X}]^{2d}$-module $M_{\phi+\delta,2d}$ of $\mathbb{R}[\underline{X}]$ generated by the elements $\phi(s)+\delta \pm \underline{X}^s$, $s \in \mathbb{N}^n$ is archimedean. By Jacobi's theorem, $f \in M_{\phi+\delta,2d}$. By Lemma \ref{sqrt}, $(\phi(s)+\delta \pm \underline{X}^s)^{1/2d} \in \mathbb{R}[[\underline{X}]]_{\phi}$ for each $s\in \mathbb{N}^n$.
\end{proof}

\begin{cor}\label{density} For any absolute value $\phi$ on $\mathbb{N}^n$ the closure of the cone $\sum \mathbb{R}[\underline{X}]^{2d}$ in $\mathbb{R}[\underline{X}]$ in the topology induced by the $\phi$-seminorm is
$\operatorname{Pos}(\mathcal{K}_{\phi})$.
\end{cor}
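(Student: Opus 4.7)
The plan is to prove both inclusions separately. For the inclusion $\overline{\sum \mathbb{R}[\underline{X}]^{2d}} \subseteq \operatorname{Pos}(\mathcal{K}_{\phi})$, I would observe that for every $\underline{x} \in \mathcal{K}_{\phi}$ the evaluation functional $\operatorname{ev}_{\underline{x}} : f \mapsto f(\underline{x})$ is continuous in the $\phi$-seminorm, since
\[
|f(\underline{x})| \;\le\; \sum_s |f_s|\,|\underline{x}^s| \;\le\; \sum_s |f_s|\,\phi(s) \;=\; \|f\|_{\phi}.
\]
Each set $\operatorname{ev}_{\underline{x}}^{-1}([0,\infty))$ is therefore closed, and since $2d$-powers of real numbers are non-negative, $\sum \mathbb{R}[\underline{X}]^{2d} \subseteq \bigcap_{\underline{x} \in \mathcal{K}_\phi} \operatorname{ev}_{\underline{x}}^{-1}([0,\infty)) = \operatorname{Pos}(\mathcal{K}_{\phi})$. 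Taking closures gives the desired inclusion.

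For the reverse inclusion, I would start with $f \in \operatorname{Pos}(\mathcal{K}_{\phi}) \cap \mathbb{R}[\underline{X}]$ and fix $\epsilon>0$. Then $f+\epsilon>0$ on $\mathcal{K}_{\phi}$, so Theorem \ref{new} yields a finite decomposition $f + \epsilon = \sum_{j=1}^{N} g_j^{2d}$ with each $g_j \in \mathbb{R}[[\underline{X}]]_{\phi}$. The key step is then to replace the $g_j$'s by polynomials without destroying the approximation in the $\phi$-seminorm. Since $\mathbb{R}[\underline{X}]$ is dense in $\mathbb{R}[[\underline{X}]]_{\phi}$ (power series truncation), I can find polynomials $h_j^{(k)}$ with $\|h_j^{(k)} - g_j\|_{\phi} \to 0$ as $k \to \infty$.

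Next I would use submultiplicativity $\|ab\|_\phi \le \|a\|_\phi \|b\|_\phi$ in the factorization
\[
(h_j^{(k)})^{2d} - g_j^{2d} \;=\; (h_j^{(k)} - g_j)\sum_{i=0}^{2d-1} (h_j^{(k)})^{i} g_j^{2d-1-i},
\]
to conclude that $(h_j^{(k)})^{2d} \to g_j^{2d}$ in $\phi$-seminorm (the $\phi$-seminorms $\|h_j^{(k)}\|_\phi$ stay bounded because they converge to $\|g_j\|_\phi$). Summing over $j$, the polynomials $\sum_{j=1}^N (h_j^{(k)})^{2d} \in \sum \mathbb{R}[\underline{X}]^{2d}$ converge to $f + \epsilon$ in the $\phi$-seminorm. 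Since this holds for every $\epsilon > 0$, $f$ lies in the $\phi$-closure of $\sum \mathbb{R}[\underline{X}]^{2d}$.

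The main obstacle is exactly the second step: the output of Theorem \ref{new} is a decomposition into $2d$-powers in the \emph{completion} $\mathbb{R}[[\underline{X}]]_{\phi}$, not in $\mathbb{R}[\underline{X}]$, and one must leverage the continuity of the $2d$-th power map (via submultiplicativity of the seminorm) to pull the decomposition back to polynomials. Everything else is a routine assembly of earlier results.
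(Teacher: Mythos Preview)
Your argument is correct and follows essentially the same route as the paper's: the $(\subseteq)$ inclusion via continuity of evaluation at points of $\mathcal{K}_\phi$, and the $(\supseteq)$ inclusion by applying Theorem~\ref{new} to $f+\epsilon$ and then approximating each factor in $\mathbb{R}[[\underline{X}]]_\phi$ by a polynomial, using submultiplicativity of $\|\cdot\|_\phi$ to control the $2d$-th powers. The only cosmetic difference is that the paper applies Theorem~\ref{new} to $f+\tfrac{\epsilon}{2}$ and chooses the polynomial approximants so that the resulting sum of $2d$-powers lies within $\epsilon$ of $f$ in one step, whereas you show $f+\epsilon$ lies in the closure for every $\epsilon>0$ and then let $\epsilon\to 0$.
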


\begin{proof} 
The inclusion ($\subseteq$) follows from continuity of
the evaluation map $f \mapsto f(\underline{x})$, for $\underline{x}\in \mathcal{K}_{\phi}$, which follows in turn from the fact that $|f(\underline{x})-g(\underline{x})| \le \| f-g\|_{\phi}$, for $\underline{x} \in \mathcal{K}_{\phi}$. To prove ($\supseteq$), suppose $f \in \mathbb{R}[\underline{X}]$, $f\ge 0$ on $\mathcal{K}_{\phi}$ and $\epsilon>0$. Then $f+\frac{\epsilon}{2} > 0$ on $\mathcal{K}_{\phi}$ so $\exists$ $f_1,\dots,f_m \in \mathbb{R}[[\underline{X}]]_{\phi}$ such that $f+\frac{\epsilon}{2} = f_1^{2d}+\dots+f_m^{2d}$, by Theorem \ref{new}. Take $g = g_1^{2d}+\dots+g_m^{2d}$ where $g_i \in \mathbb{R}[\underline{X}]$ is such that $\| f_i^{2d}-g_i^{2d}\|_{\phi} \le \frac{\epsilon}{2m}$, $i=1,\dots,m$. Then $g\in \sum \mathbb{R}[\underline{X}]^{2d}$, $\| f-g\|_{\phi}\le \epsilon$.
\end{proof}

\begin{cor}\label{berg et al} Suppose $L : \mathbb{R}[\underline{X}] \rightarrow \mathbb{R}$ is linear, $L(p^{2d}) \ge 0$ $\forall$ $p\in \mathbb{R}[\underline{X}]$, and $\exists$ an absolute value $\phi$ and a constant $C>0$ such that  $|L(\underline{X}^s)| \le C\phi(s)$ $\forall$ $s\in \mathbb{N}^n$. Then $\exists$ a unique positive borel measure $\mu$ on the set $\mathcal{K}_{\phi}$
such that $L(f) = \int f \, d\mu$ $\forall$ $f\in \mathbb{R}[\underline{X}]$.
\end{cor}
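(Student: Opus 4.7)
The plan is to combine the hypotheses in exactly the form needed to apply Theorem \ref{haviland,etc}(2) with $A = \mathbb{R}[\underline{X}]$ and $\mathcal{K} = \mathcal{K}_{\phi}$. To do this, I need two ingredients: that $\mathcal{K}_{\phi}$ is compact in $\mathbb{R}^n = X_A$, and that $L(\operatorname{Pos}(\mathcal{K}_{\phi})) \subseteq \mathbb{R}_{\ge 0}$. Compactness is immediate, since the defining inequalities $|x_i| = |\underline{x}^{e_i}| \le \phi(e_i)$ force $\mathcal{K}_\phi$ to be a closed and bounded subset of $\mathbb{R}^n$.

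For the positivity, the first step is to observe that the growth bound $|L(\underline{X}^s)| \le C\phi(s)$ makes $L$ continuous in the $\phi$-seminorm. Indeed, for $f = \sum_s f_s \underline{X}^s \in \mathbb{R}[\underline{X}]$ we have
\[
|L(f)| \le \sum_s |f_s|\,|L(\underline{X}^s)| \le C \sum_s |f_s|\phi(s) = C\|f\|_\phi.
\]
Since $L(p^{2d}) \ge 0$ for every $p$, the linear functional $L$ is non-negative on the cone $\sum \mathbb{R}[\underline{X}]^{2d}$, and by continuity it is non-negative on the closure of this cone in the $\phi$-seminorm topology. By Corollary \ref{density} this closure is precisely $\operatorname{Pos}(\mathcal{K}_{\phi})$, so $L(\operatorname{Pos}(\mathcal{K}_{\phi})) \subseteq \mathbb{R}_{\ge 0}$.

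Now Theorem \ref{haviland,etc}(2) applies to give a unique positive borel measure $\mu$ on $\mathcal{K}_\phi$ with $L(f) = \int f\, d\mu$ for every $f \in \mathbb{R}[\underline{X}]$, which is the desired conclusion.

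There is no real obstacle: all the hard work has been packaged into Corollary \ref{density} (itself a consequence of Jacobi's theorem via Theorem \ref{new}) and into Theorem \ref{haviland,etc}(2). The only point that deserves care is verifying continuity of $L$ in the $\phi$-seminorm, and this is a one-line estimate from the hypothesis $|L(\underline{X}^s)| \le C\phi(s)$. Alternatively, one could bypass Corollary \ref{density} and argue directly via Corollary \ref{cor of jacobi theorem} by extending $L$ to $\mathbb{R}[[\underline{X}]]_{\phi}$ and using Theorem \ref{new} to show $L \ge 0$ on polynomials strictly positive on $\mathcal{K}_{\phi}$, but the route through Corollary \ref{density} is the most economical.
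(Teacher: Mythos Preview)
Your proof is correct and follows essentially the same route as the paper: establish continuity of $L$ in the $\phi$-seminorm from the bound $|L(\underline{X}^s)|\le C\phi(s)$, use Corollary~\ref{density} to conclude $L\ge 0$ on $\operatorname{Pos}(\mathcal{K}_\phi)$, and then apply Theorem~\ref{haviland,etc}(2). The paper's version spells out the closure argument as an explicit $\epsilon$-approximation, but the content is identical.
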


\begin{proof} The hypothesis implies that $|L(f)-L(g)| \le C\| f-g\|_{\phi}$, so $L$ is continuous.
Fix $f \in \operatorname{Pos}(\mathcal{K}_{\phi})$. Fix $\epsilon >0$. By Corollary \ref{density}, $\exists$ $g\in \sum \mathbb{R}[\underline{X}]^{2d}$ such that $\| f-g\|_{\phi} \le \epsilon$, so $| L(f)-L(g)|\le C \epsilon$. Since $L(g)\ge 0$, this implies $L(f)\ge -C\epsilon$.
Since $\epsilon>0$ is arbitrary, this implies $L(f)\ge 0$. The conclusion follows, by Theorem \ref{haviland,etc}(2).
\end{proof}

\begin{rem} (i) In the case $d=1$ Corollary \ref{berg et al} is well-known. It can be obtained by applying \cite[Theorem 4.2.5]{BCR} to the semigroup $(\mathbb{N}^n,+)$ equipped with the identity involution;  see \cite[Theorem 2.2]{LN}. At the same time, the proof given here is new, even in the case $d=1$.

(ii) The converse of Corollary \ref{berg et al} holds: If $L(f) = \int f \, d\mu$ where $\mu$ is a positive borel measure on $\mathcal{K_{\phi}}$ then $L(p^{2d})\ge 0$ for all $p \in \mathbb{R}[\underline{X}]$ and $|L(\underline{X}^s)| \le C\phi(s)$ where $C:= \mu(\mathcal{K}_{\phi})$. This is clear.

(iii) We have proved Corollary \ref{berg et al} from Corollary \ref{density} using Theorem \ref{haviland,etc}(2). One can also prove Corollary \ref{density} from Corollary \ref{berg et al} using Corollary \ref{hahn banach corollary}. In this way, Corollary \ref{density} and Corollary \ref{berg et al} can be seen to carry exactly the same information.

(iv) Corollary \ref{density} extends \cite[Theorem 9.1]{BC}.

(v) In \cite{LN}, Lasserre and Netzer use \cite[Theorem 4.2.5]{BCR} to prove that for $\phi$ equal to the constant function 1 and for any $f\in \operatorname{Pos}(\mathcal{K}_{\phi})$ and any real $\epsilon >0$, and any integer $k\ge 1$ sufficiently large (depending on $\epsilon$ and $f$), $$f+\epsilon(1+\sum_{i=1}^n X_i^{2k}) \in \sum \mathbb{R}[\underline{X}]^2.$$ It is not clear how to extend this result with $\sum \mathbb{R}[\underline{X}]^2$ replaced by $\sum \mathbb{R}[\underline{X}]^{2d}$.

(vi) In \cite[Theorem 4.6]{GKS} and \cite[Theorem 4.10]{GKS} Ghasemi, Kuhlmann and Samei prove analogs of \cite[Theorem 9.1]{BC} for the $\ell_{p}$-norms $$\| f \|_{p} := (\sum_{s\in \mathbb{N}^n} |f_s|^p)^{1/p}, \, 1\le p < \infty, \, \| f\|_{\infty} := \sup \{ |f_s| \mid s\in \mathbb{N}^n\}$$ and for certain weighted versions of the $\ell_p$-norms.
Replacing \cite[Theorem 9.1]{BC} by
Corollary \ref{density} in these proofs, one verifies that these results carry over word-for-word with $\sum \mathbb{R}[\underline{X}]^2$ replaced by $\sum \mathbb{R}[\underline{X}]^{2d}$.
\end{rem}

\section{general case}
\label{extension}

Our goal in this section is to extend Corollary \ref{density} and Corollary \ref{berg et al} to arbitrary commutative semigroups with involution; see Theorem \ref{density 2} and Corollary \ref{berg et al 2}.

As in \cite{BCR} \cite{BM}, we work with a commutative $*$-semigroup $S=(S,\cdot, 1,*)$ with neutral element $1$ and involution $*$. The involution $* :S \rightarrow S$ satisfies $$(st)^*=s^*t^*, \ (s^*)^* = s \text{ and } 1^*=1.$$
We denote by $\mathbb{C}[S]$ the semigroup ring of $S$ with coefficients in $\mathbb{C}$. Elements of $\mathbb{C}[S]$ have the form $f= \sum_{s\in S} f_ss$ (finite sum), $f_s\in \mathbb{C}$. $\mathbb{C}[S]$ has the structure of a $\mathbb{C}$-algebra with involution. Addition, scalar multiplication and multiplication are defined by $$f+g =\sum (f_s+g_s)s, \ zf = \sum (zf_s)s, \ fg = \sum_{s,t} f_sg_tst = \sum_u (\sum_{st=u} f_sg_t)u.$$
The involution is defined by $$f^* = \sum \overline{f_s}s^*.$$
An element $f \in \mathbb{C}[S]$ is said to be \it symmetric \rm if $f^* = f$, i.e., if $f_{s^*}=\overline{f_s}$ for all $s\in S$. We denote the $\mathbb{R}$-algebra consisting of all symmetric elements of $\mathbb{C}[S]$ by $A_S$. Clearly $$\mathbb{C}[S] = A_S\oplus iA_S.$$ As an $\mathbb{R}$-vector space $A_S$ is generated by the elements $s+s^*$ and $i(s-s^*)$, $s\in S$. If the involution on $S$ is the identity, i.e., $s^*=s$ for all $s\in S$, then $A_S = \mathbb{R}[S]$, the semigroup ring of $S$ with coefficients in $\mathbb{R}$.

A \it semicharacter \rm of $S$ is a function $\alpha : S \rightarrow \mathbb{C}$ satisfying
\begin{enumerate}
\item $\alpha(1) = 1$;
\item $\alpha(st) = \alpha(s)\alpha(t)$ $\forall$ $s,t \in S$;
\item $\alpha(s^*)=\overline{\alpha(s)}$ $\forall$ $s\in S$.
\end{enumerate}
We denote by $S'$ the set of all semicharacters of $S$.  Semicharacters $\alpha$ of $S$ correspond bijectively to $*$-algebra homomorphisms $\alpha : \mathbb{C}[S] \rightarrow \mathbb{C}$ via $\alpha(f) := \sum_{s\in S} f_s \alpha(s)$. In turn, $*$-algebra homomorphisms $\alpha : \mathbb{C}[S] \rightarrow \mathbb{C}$ correspond bijectively to ring homomorphisms $\alpha: A_S \rightarrow \mathbb{R}$ via $\alpha(f+gi) = \alpha(f)+\alpha(g)i$. In this way, $S'$ and $X_{A_S}$ are naturally identified.

For any function $\phi : S \rightarrow \mathbb{R}_{\ge 0}$ define
$$\mathcal{K}_{\phi} := \{ \alpha \in S' \mid |\alpha(s)| \le \phi(s) \ \forall \ s\in S \}.$$
Fix an integer $d\ge 1$. Denote by $M_{\phi,2d}$ the $\sum A_S^{2d}$-module of $A_S$ generated by the elements $$\phi(s)^2- ss^*, \ 2\phi(s)\pm (s+s^*) \text{ and } 2\phi(s)\pm i(s-s^*), \ s\in S.$$

\begin{lemma} \label{technical} \
\begin{enumerate}
\item $M_{\phi,2d}$ is archimedean.
\item The non-negativity set of $M_{\phi,2d}$ in $S'$ is $\mathcal{K}_{\phi}$.
\end{enumerate}
\end{lemma}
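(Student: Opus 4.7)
Both parts are essentially bookkeeping on the generators of $M_{\phi,2d}$ and I do not anticipate any genuine obstacle; the only care needed is in handling the identification of $S'$ with $X_{A_S}$ recalled just before the statement.

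For (1), the plan is to dominate an arbitrary element of $A_S$ by a non-negative real constant modulo $M_{\phi,2d}$. Since $A_S$ is spanned as an $\mathbb{R}$-vector space by $1$ together with the elements $s+s^*$ and $i(s-s^*)$ for $s\in S$, it suffices to observe that for each such generator $\sigma$ (of the form $s+s^*$ or $i(s-s^*)$) both $2\phi(s)+\sigma$ and $2\phi(s)-\sigma$ are among the defining generators of $M_{\phi,2d}$. Using $\mathbb{R}_{\ge 0}\subseteq \sum A_S^{2d}$ (since $r=(r^{1/(2d)})^{2d}$ for $r\ge 0$), one obtains $2|c|\phi(s)+c\sigma = |c|\bigl(2\phi(s)+\operatorname{sgn}(c)\sigma\bigr)\in M_{\phi,2d}$ for every $c\in\mathbb{R}$. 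Writing an arbitrary $a\in A_S$ as a finite sum $a=c_0+\sum_j c_j\sigma_j$ and adding the corresponding dominations yields some real $N\ge 0$ with $N+a\in M_{\phi,2d}$; any integer $n\ge N$ then gives $n+a=(n-N)+(N+a)\in M_{\phi,2d}$, because $n-N\in\mathbb{R}_{\ge 0}\cdot 1\subseteq M_{\phi,2d}$.

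For (2), I would evaluate a semicharacter $\alpha\in S'=X_{A_S}$ on the defining generators. Using $\alpha(s^*)=\overline{\alpha(s)}$ one computes
\[
\alpha(ss^*)=|\alpha(s)|^2,\qquad \alpha(s+s^*)=2\operatorname{Re}\alpha(s),\qquad \alpha(i(s-s^*))=-2\operatorname{Im}\alpha(s).
\]
The generator $\phi(s)^2-ss^*$ is the decisive one: non-negativity at $\alpha$ is exactly $|\alpha(s)|\le\phi(s)$, which immediately yields the inclusion $\mathcal{K}_{M_{\phi,2d}}\subseteq \mathcal{K}_\phi$. Conversely, if $\alpha\in\mathcal{K}_\phi$ then $|\operatorname{Re}\alpha(s)|,|\operatorname{Im}\alpha(s)|\le|\alpha(s)|\le\phi(s)$, so each of the five types of generators is non-negative at $\alpha$; combined with $\alpha\bigl(\sum A_S^{2d}\bigr)\subseteq\mathbb{R}_{\ge 0}$ (as $\alpha(a^{2d})=\alpha(a)^{2d}$), this forces $\alpha(g)\ge 0$ for every $g\in M_{\phi,2d}$, giving $\mathcal{K}_\phi\subseteq\mathcal{K}_{M_{\phi,2d}}$.
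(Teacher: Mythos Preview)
Your proof is correct and follows essentially the same approach as the paper: part (1) uses that the $s+s^*$ and $i(s-s^*)$ span $A_S$ over $\mathbb{R}$ together with the presence of $2\phi(s)\pm(s+s^*)$ and $2\phi(s)\pm i(s-s^*)$ among the generators, and part (2) hinges on the generator $\phi(s)^2-ss^*$ for one inclusion and the inequality $|\operatorname{Re}\alpha(s)|,|\operatorname{Im}\alpha(s)|\le|\alpha(s)|$ for the other. If anything, you are slightly more explicit than the paper in spelling out why non-negativity on the generators propagates to all of $M_{\phi,2d}$ via $\alpha(a^{2d})=\alpha(a)^{2d}\ge 0$.
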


\begin{proof} (1) The elements $s+s^*$, $i(s-s^*)$ generate $A_S$ as an $\mathbb{R}$-vector space and  $2\phi(s)\pm (s+s^*), 2\phi(s)\pm i(s-s^*) \in M_{\phi,2d}$, so  $M_{\phi,2d}$ is archimedean.

(2) For $\alpha \in S'$, $$|\alpha(s)|\le \phi(s) \ \Leftrightarrow \ \alpha(s)\overline{\alpha(s)} \le \phi(s)^2 \ \Leftrightarrow \ \phi(s)^2-ss^* \ge 0 \text{ at } \alpha.$$ Also, using the inequality $\sqrt{a^2+b^2} \ge \max\{ |a|,|b|\}$, $$|\alpha(s)| \le \phi(s) \ \Rightarrow \ |\frac{\alpha(s)+\overline{\alpha(s)}}{2}| \le \phi(s) \ \Leftrightarrow \ 2\phi(s)\pm(s+s^*)\ge 0 \text{ at } \alpha,$$ and $$|\alpha(s)| \le \phi(s) \ \Rightarrow \ |\frac{\alpha(s)-\overline{\alpha(s)}}{2i}| \le \phi(s) \ \Leftrightarrow \ 2\phi(s)\pm i(s-s^*)\ge 0 \text{ at } \alpha.$$
\end{proof}

A function $\phi : S \rightarrow \mathbb{R}_{\ge 0}$ is called an \it absolute value \rm
if
\begin{enumerate}
\item $\phi(1) \ge 1$;
\item $\phi(st) \le \phi(s)\phi(t)$ $\forall$ $s,t \in S$;
\item $\phi(s^*)=\phi(s)$ $\forall$ $s\in S$.
\end{enumerate}
Suppose that $\phi$ is an absolute value on $S$. For $f = \sum_s  f_s s \in \mathbb{C}[S]$ define the \it $\phi$-seminorm \rm of $f$ to be $\| f \|_{\phi} := \sum_s |f_s|\phi(s)$.
One checks easily that
\begin{align*} \| f+g\|_{\phi} \le &\| f\|_{\phi} +\| g\|_{\phi}, \ \|zf\|_{\phi} = |z|\|f\|_{\phi}, \\
\| fg\|_{\phi} \le &\| f\|_{\phi} \| g\|_{\phi} \text{ and } \| f^*\|_{\phi} = \| f \|_{\phi},
\end{align*}
so the addition, scalar multiplication, multiplication and conjugation in the semigroup algebra $\mathbb{C}[S]$ are continuous in the topology induced by the $\phi$-seminorm.

\begin{lemma} \label{basicapproximation}
Let $r\in \mathbb{R}$, $f\in A_S$, $r>\| f\|_{\phi}$. Then, for each real $\epsilon >0$, there exists $g \in A_S$ such that $\| (r+f)-g^{2d}\|_{\phi}<\epsilon$.
\end{lemma}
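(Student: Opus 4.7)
The plan is to mimic the one-variable strategy behind Lemma~\ref{sqrt}: expand $(r+t)^{1/2d}$ as a convergent real power series in $t$, substitute $t=f$, and truncate in order to stay inside $A_S$. Since $r>\|f\|_\phi\ge 0$ forces $r>0$, I would begin by writing
$$h(t):=(r+t)^{1/2d}=\sum_{i=0}^\infty a_i t^i,$$
a Taylor series with radius of convergence $r$. Absolute convergence at $t=\|f\|_\phi<r$ gives $\sum_{i=0}^\infty |a_i|\,\|f\|_\phi^{\,i}<\infty$; this is the single analytic estimate that will drive the rest of the argument.

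As approximants I would take the partial sums $g_N:=\sum_{i=0}^N a_i f^i$. Each $g_N$ lies in $A_S$, because the $a_i$ are real and $A_S$ is an $\mathbb{R}$-subalgebra of $\mathbb{C}[S]$ containing $1$ and $f$ (the subalgebra property uses that $\mathbb{C}[S]$ is commutative, so a product of symmetric elements is symmetric). The task then reduces to showing $\|g_N^{2d}-(r+f)\|_\phi\to 0$ as $N\to\infty$; any sufficiently large $N$ will then furnish a $g:=g_N$ with the required property.

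For this convergence I would expand $g_N^{2d}=\sum_{j=0}^{2dN} c_j^{(N)} f^j$ with $c_j^{(N)}=\sum_{i_1+\cdots+i_{2d}=j,\ i_k\le N} a_{i_1}\cdots a_{i_{2d}}$, and compare this with the formal identity $h(t)^{2d}=r+t$ in $\mathbb{R}[[t]]$, whose unrestricted coefficients $c_j=\sum_{i_1+\cdots+i_{2d}=j} a_{i_1}\cdots a_{i_{2d}}$ satisfy $c_0=r$, $c_1=1$ and $c_j=0$ for $j\ge 2$. The key observation is that for $j\le N$ the constraint $i_k\le N$ is automatic (since $i_k\le j\le N$), so $c_j^{(N)}=c_j$ in that range. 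Consequently only tail terms survive, and using submultiplicativity $\|f^j\|_\phi\le\|f\|_\phi^{\,j}$ I would bound
$$\|g_N^{2d}-(r+f)\|_\phi \le \sum_{j>N}\tilde c_j\,\|f\|_\phi^{\,j}, \qquad \tilde c_j:=\sum_{i_1+\cdots+i_{2d}=j}|a_{i_1}\cdots a_{i_{2d}}|.$$
Because $\sum_j \tilde c_j\,\|f\|_\phi^{\,j}=\bigl(\sum_i |a_i|\,\|f\|_\phi^{\,i}\bigr)^{2d}<\infty$, the tail vanishes as $N\to\infty$.

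The only real obstacle I anticipate is the combinatorial bookkeeping in this coefficient comparison: one must verify simultaneously the exact identity $c_j^{(N)}=c_j$ for $j\le N$ and the termwise domination $|c_j^{(N)}|\le\tilde c_j$ for $j>N$. Once those are recorded, submultiplicativity of $\|\cdot\|_\phi$ together with the absolute convergence established in the first paragraph finishes the proof with no further analytic input.
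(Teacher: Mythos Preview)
Your argument is correct. The underlying idea---expand $(r+t)^{1/2d}$ as a real Taylor series with radius of convergence $r>\|f\|_\phi$, substitute $t=f$, truncate, and use submultiplicativity of $\|\cdot\|_\phi$ to control the error---is exactly the mechanism behind the paper's proof as well.

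The paper, however, packages this more abstractly rather than doing the coefficient bookkeeping by hand. It introduces the auxiliary absolute value $\phi'(i):=\|f^i\|_\phi$ on $(\mathbb{N},+)$ and the $\mathbb{R}$-algebra homomorphism $\tau:\mathbb{R}[X]\to A_S$, $X\mapsto f$, then invokes Lemma~\ref{sqrt} to get $(r+X)^{1/2d}\in\mathbb{R}[[X]]_{\phi'}$; density of $\mathbb{R}[X]$ in $\mathbb{R}[[X]]_{\phi'}$ and continuity of the $2d$-th power map in the $\phi'$-seminorm yield $h\in\mathbb{R}[X]$ with $\|r+X-h^{2d}\|_{\phi'}<\epsilon$, and the contraction $\|\tau(p)\|_\phi\le\|p\|_{\phi'}$ transports this back to $A_S$ with $g=\tau(h)$. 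What this buys is modularity: the analytic estimate is isolated once and for all in Lemma~\ref{sqrt}, and no Cauchy-product coefficients need to be tracked. What your direct route buys is self-containment: you avoid introducing $\phi'$ and $\tau$, at the price of the explicit comparison $c_j^{(N)}=c_j$ for $j\le N$ and the domination $|c_j^{(N)}|\le\tilde c_j$ for $j>N$. The two arguments are of comparable length and are really the same proof written at two different levels of abstraction.
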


\begin{proof} Consider the $\mathbb{R}$-algebra homomorphism $\tau : \mathbb{R}[X] \rightarrow A_S$ defined by $X\mapsto f$ and consider the absolute value $\phi'$ on $(\mathbb{N},+)$ defined by $\phi'(i) = \| f^i\|_{\phi}$. Applying Lemma \ref{sqrt} we see that $(r+ X)^{1/2d} \in \mathbb{R}[[X]]_{\phi'}$. Combining this with the density of $\mathbb{R}[X]$ in $\mathbb{R}[[X]]_{\phi'}$ and the continuity of the multiplication in the topology induced by the $\phi'$-seminorm, there exists $h\in \mathbb{R}[X]$ such that $\| r+ X - h^{2d}\|_{\phi'}<\epsilon$. Take $g = \tau(h)$. Since $\tau(r+ X-h^{2d}) = r+f-g^{2d}$ and  $\| \tau(p)\|_{\phi} \le \| p \|_{\phi'}$, for all $p \in \mathbb{R}[X]$, this completes the proof.
\end{proof}

\begin{theorem} \label{density 2} Suppose $\phi$ is an absolute value on a commutative semigroup $S$ with involution and $d$ is any positive integer. Then the closure of the cone $\sum A_S^{2d}$ in $A_S$ in the topology induced by the $\phi$-seminorm is equal to  $\operatorname{Pos}(\mathcal{K}_{\phi})$.
\end{theorem}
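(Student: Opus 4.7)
The plan is to mirror the proof of Corollary \ref{density}, replacing the role of the power series ring $\mathbb{R}[[\underline{X}]]_{\phi}$ by direct approximation in $\phi$-seminorm via Lemma \ref{basicapproximation}. The inclusion $\overline{\sum A_S^{2d}} \subseteq \operatorname{Pos}(\mathcal{K}_{\phi})$ is the easy half: for $\alpha \in \mathcal{K}_{\phi}$ the estimate $|\alpha(f)| \le \sum_s |f_s|\,|\alpha(s)| \le \|f\|_{\phi}$ shows that evaluation at $\alpha$ is continuous on $A_S$, and any element of $\sum A_S^{2d}$ is sent by $\alpha$ to a sum of $2d$-th powers of real numbers, which is non-negative. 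Hence $\operatorname{Pos}(\mathcal{K}_{\phi})$ is closed in the $\phi$-seminorm topology and contains $\sum A_S^{2d}$.

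For the reverse inclusion, fix $f \in A_S$ with $\hat{f} \ge 0$ on $\mathcal{K}_{\phi}$ and $\epsilon > 0$. I would introduce, for a parameter $\delta > 0$ yet to be chosen, the function $\psi := \phi + \delta$ (merely non-negative, not an absolute value) and the $\sum A_S^{2d}$-module $M_{\psi,2d}$ generated by $\psi(s)^2 - ss^*$, $2\psi(s) \pm (s+s^*)$ and $2\psi(s) \pm i(s-s^*)$ for $s\in S$. Both parts of Lemma \ref{technical} transfer verbatim with $\phi$ replaced by $\psi$, so $M_{\psi,2d}$ is archimedean and its non-negativity set in $S'$ equals $\mathcal{K}_{\phi+\delta}$. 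Each $\mathcal{K}_{\phi+\delta}$ is compact, the family is nested, and $\bigcap_{\delta>0}\mathcal{K}_{\phi+\delta} = \mathcal{K}_{\phi}$; a standard compactness argument then yields $\delta > 0$ with $f + \epsilon/2 > 0$ on $\mathcal{K}_{\phi+\delta}$.

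Jacobi's theorem (Theorem \ref{jacobi theorem}) now gives a decomposition $f + \epsilon/2 = \sigma_0 + \sum_k \sigma_k g_k$ with $\sigma_k \in \sum A_S^{2d}$ and each $g_k$ one of the generators above. The decisive feature of the $\delta$-shift is that for every such generator the $\phi$-seminorm of its non-constant part is \emph{strictly} dominated by the constant term: $\|ss^*\|_{\phi} \le \phi(s)^2 < (\phi(s)+\delta)^2$, $\|\pm(s+s^*)\|_{\phi} = 2\phi(s) < 2(\phi(s)+\delta)$, and similarly for the $i(s-s^*)$ generators. Lemma \ref{basicapproximation} therefore applies to each $g_k$, producing $h_k \in A_S$ with $\|g_k - h_k^{2d}\|_{\phi}$ as small as we wish. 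Choosing this smallness to be at most $\epsilon/\bigl(2(1 + \sum_k \|\sigma_k\|_{\phi})\bigr)$ and setting $g := \sigma_0 + \sum_k \sigma_k h_k^{2d}$, sub-multiplicativity of $\|\cdot\|_{\phi}$ gives $\|(f + \epsilon/2) - g\|_{\phi} < \epsilon/2$, hence $\|f - g\|_{\phi} < \epsilon$; and $g \in \sum A_S^{2d}$ because $\sum A_S^{2d}$ is closed under products.

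The main obstacle is conceptual: the shift $\phi \leadsto \phi + \delta$ must be engineered to do two things simultaneously — to shrink the non-negativity set just enough that the compact witness $\mathcal{K}_{\phi+\delta}$ still has $f + \epsilon/2$ strictly positive on it, and to open the strict inequality demanded by Lemma \ref{basicapproximation} when one approximates the module generators by $2d$-th powers. Invoking Jacobi directly for $M_{\phi,2d}$ would place each generator exactly at the boundary of that lemma's hypothesis and the approximation step would collapse.
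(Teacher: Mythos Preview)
Your proof is correct, but the paper takes a slightly different (and arguably simpler) route. You shift $\phi$ to $\phi+\delta$ \emph{before} invoking Jacobi, using a compactness argument on the nested family $\{\mathcal{K}_{\phi+\delta}\}_{\delta>0}$ to find a $\delta$ with $f+\epsilon/2>0$ on $\mathcal{K}_{\phi+\delta}$; this is exactly the strategy of Theorem~\ref{new} in the polynomial case. The paper instead applies Jacobi directly to $M_{\phi,2d}$, obtains a decomposition $f+\epsilon/2=\sum_i g_i m_i$ with the $m_i$ sitting on the boundary of Lemma~\ref{basicapproximation}, and only \emph{then} introduces a small $\delta$: it chooses $\delta$ so that $(\sum_i\|g_i\|_{\phi})\delta\le\epsilon/2$ and approximates $\tfrac{\delta}{2}+m_i$ rather than $m_i$ itself. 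Since the constant term of $\tfrac{\delta}{2}+m_i$ strictly dominates the $\phi$-seminorm of its non-constant part, Lemma~\ref{basicapproximation} applies and gives $h_i$ with $\|m_i-h_i^{2d}\|_{\phi}\le\delta$.

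So your final paragraph overstates the obstacle: the boundary case does not force a pre-shift of $\phi$, because one can always pad each generator by a small constant after the Jacobi decomposition is in hand. The paper's approach avoids the compactness detour and lets $\delta$ be tailored to the specific decomposition, while your approach has the virtue of matching the polynomial-case argument line for line. Both work.
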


\begin{proof} Since $\sum A_S^{2d} \subseteq \operatorname{Pos}(\mathcal{K}_{\phi})$ and $\operatorname{Pos}(\mathcal{K}_{\phi})$ is closed, one inclusion in clear. The fact that $\operatorname{Pos}(\mathcal{K}_{\phi})$ is closed comes from the fact that each $\alpha \in \mathcal{K}_{\phi}$, viewed as a ring homomorphism $\alpha : A_S \rightarrow \mathbb{R}$ in the standard way, satisfies $|\alpha(f)| \le \| f\|_{\phi}$ for all $f\in A_S$, so $\alpha$ is continuous for each $\alpha \in \mathcal{K}_{\phi}$, and $\operatorname{Pos}(\mathcal{K}_{\phi}) = \cap_{\alpha\in \mathcal{K}_{\phi}}\alpha^{-1}(\mathbb{R}_{\ge 0})$.

For the other inclusion we must show if $f\in \operatorname{Pos}(\mathcal{K}_{\phi})$ and $\epsilon >0$, there exists $g \in \sum A_S^{2d}$ such that $\| f-g\|_{\phi} \le \epsilon$. Note that $f+\frac{\epsilon}{2}$ is strictly positive at each $\alpha \in \mathcal{K}_{\phi}$ so, by Lemma \ref{technical} and Jacobi's theorem, $$f+\frac{\epsilon}{2} = \sum_{i=0}^k g_im_i$$ where $g_i \in \sum A_S^{2d}$, $i=0,\dots, k$, $m_0=1$, and $m_i \in \{ \phi(s)^2-ss^*, \ 2\phi(s)\pm (s+s^*), \ 2\phi(s)\pm i(s-s^*)\mid s\in S \}$, $i=1,\dots,k$. Choose $\delta>0$ so that $(\sum_{i=1}^k \| g_i\|_{\phi}) \delta \le \frac{\epsilon}{2}$. By Lemma, \ref{basicapproximation} there exists $h_i \in A_S$ such that $\| \frac{\delta}{2}+m_i-h_i^{2d}\|_{\phi} \le \frac{\delta}{2}$, i.e., $\| m_i-h_i^{2d}\|_{\phi} \le \delta$, $i=1,\dots,k$. Take $g= g_0+\sum_{i=1}^k g_ih_i^{2d}$. Then $g\in \sum A_S^{2d}$, and $$\| f-g\|_{\phi} = \| \sum_{i=1}^k g_im_i-\sum_{i=1}^k g_ih_i^{2d}-\frac{\epsilon}{2}\| \le  \sum_{i=1}^k \|g_i\|_{\phi}\|m_i-h_i^{2d}\|_{\phi}+\frac{\epsilon}{2}\le \epsilon.$$
\end{proof}

\begin{cor} \label{berg et al 2} Let $S$ be a commutative semigroup with involution and let $d$ be a positive integer. Let $L: \mathbb{C}[S] \rightarrow \mathbb{C}$ be a $*$-linear mapping such that $L(p^{2d})\ge 0$ for all $p \in A_S$ and suppose there exists an absolute value $\phi$ on $S$ and a constant $C>0$ such that $|L(s)|\le C\phi(s)$ for all $s\in S$. Then there exists a unique positive borel measure $\mu$ on $\mathcal{K}_{\phi}$ such that $L(f) = \int \hat{f} d\mu$ for each $f\in \mathbb{C}[S]$.
\end{cor}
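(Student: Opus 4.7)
The plan is to mirror the proof of Corollary \ref{berg et al}, substituting Theorem \ref{density 2} for Corollary \ref{density} and reducing the integral representation to the real symmetric part $A_S$, where Theorem \ref{haviland,etc}(2) can be applied. First I would note that $L|_{A_S}$ takes only real values: $*$-linearity gives $L(f^*) = \overline{L(f)}$, and the identity $f = f^*$ for $f\in A_S$ forces $L(f)\in\mathbb{R}$. Hence $L|_{A_S}:A_S\to\mathbb{R}$ is $\mathbb{R}$-linear.

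Next I would establish that $L$ is continuous with respect to the $\phi$-seminorm. For $f = \sum_s f_s s\in\mathbb{C}[S]$, the hypothesis $|L(s)|\le C\phi(s)$ combined with $\mathbb{C}$-linearity yields
\[
|L(f)| \le \sum_s |f_s|\,|L(s)| \le C\|f\|_{\phi},
\]
so in particular $L|_{A_S}$ is continuous in the induced seminorm topology on $A_S$. Now for any $f\in\operatorname{Pos}(\mathcal{K}_{\phi})\cap A_S$ and any $\epsilon>0$, Theorem \ref{density 2} supplies $g\in\sum A_S^{2d}$ with $\|f-g\|_{\phi}\le\epsilon$; since $L(g)\ge 0$ by hypothesis and $|L(f)-L(g)|\le C\epsilon$, one obtains $L(f)\ge -C\epsilon$, and letting $\epsilon\downarrow 0$ yields $L|_{A_S}(\operatorname{Pos}(\mathcal{K}_{\phi}))\subseteq\mathbb{R}_{\ge 0}$. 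Since $\mathcal{K}_{\phi}$ is compact, being the non-negativity set of the archimedean module $M_{\phi,2d}$ by Lemma \ref{technical}, Theorem \ref{haviland,etc}(2) applies to the $\mathbb{R}$-algebra $A_S$ and produces a unique positive borel measure $\mu$ on $\mathcal{K}_{\phi}$ with $L(f)=\int\hat{f}\,d\mu$ for every $f\in A_S$.

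Finally I would extend the integral representation from $A_S$ to $\mathbb{C}[S]=A_S\oplus iA_S$ by $\mathbb{C}$-linearity. Writing $f=f_1+if_2$ with $f_j\in A_S$, and noting that at each semicharacter $\alpha\in\mathcal{K}_{\phi}$ one has $\alpha(f_1+if_2)=\alpha(f_1)+i\alpha(f_2)$, so that $\hat{f}=\hat{f_1}+i\hat{f_2}$ on $\mathcal{K}_{\phi}$, we obtain
\[
L(f)=L(f_1)+iL(f_2)=\int\hat{f_1}\,d\mu+i\int\hat{f_2}\,d\mu=\int\hat{f}\,d\mu.
\]
Uniqueness of $\mu$ as a representing measure on all of $\mathbb{C}[S]$ is inherited from its uniqueness on $A_S$: any measure $\mu'$ representing $L$ on $\mathbb{C}[S]$ restricts to a representation of $L|_{A_S}$ and therefore coincides with $\mu$ by Theorem \ref{haviland,etc}(2). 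I do not foresee a real obstacle; the only point requiring any care is aligning the setup of Theorem \ref{haviland,etc}(2) with $A_S$ rather than $\mathbb{C}[S]$, which is exactly why one works with $L|_{A_S}$ before re-complexifying.
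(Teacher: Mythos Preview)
Your proposal is correct and follows essentially the same approach as the paper's proof: reduce to the real $\mathbb{R}$-linear map $L|_{A_S}$, use the bound $|L(s)|\le C\phi(s)$ to get continuity, invoke Theorem~\ref{density 2} to conclude $L|_{A_S}\ge 0$ on $\operatorname{Pos}(\mathcal{K}_{\phi})$, and then apply Theorem~\ref{haviland,etc}(2). The paper is simply terser, compressing your first and last paragraphs into the single remark that $*$-linear maps $\mathbb{C}[S]\to\mathbb{C}$ correspond bijectively to $\mathbb{R}$-linear maps $A_S\to\mathbb{R}$ via $L(f+ig)=L(f)+iL(g)$.
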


Here, $\hat{f} : S' \rightarrow \mathbb{C}$ is defined by $\hat{f}(\alpha) := \alpha(f)$ $\forall$ $\alpha \in S'$, equivalently, if $f = g+ih$, $g,h \in A_S$, then $\hat{f} := \hat{g}+i\hat{h}$.

\begin{proof} $*$-linear mappings $L : \mathbb{C}[S] \rightarrow \mathbb{C}$ correspond bijectively to $\mathbb{R}$-linear mappings $L : A_S \rightarrow \mathbb{R}$, the correspondence being given by $L(f+gi) = L(f)+L(g)i$. The hypothesis implies that $|L(f)-L(g)| \le C\| f-g\|_{\phi}$, so $L$ is continuous.
Fix $f \in \operatorname{Pos}(\mathcal{K}_{\phi})$. Fix $\epsilon >0$. By Theorem \ref{density 2}, $\exists$ $g\in \sum A_S^{2d}$ such that $\| f-g\|_{\phi} \le \epsilon$, so $| L(f)-L(g)|\le C \epsilon$. Since $L(g)\ge 0$, this implies $L(f)\ge -C\epsilon$.
Since $\epsilon>0$ is arbitrary, this implies $L(f)\ge 0$. The conclusion follows, by Theorem \ref{haviland,etc}(2).
\end{proof}

\begin{rem}\label{no name} For $p\in \mathbb{C}[S]$, $p=q+ir$, $q,r \in A_S$, $pp^* = (q+ir)(q-ir) = q^2+r^2$. Thus, for $L : \mathbb{C}[S] \rightarrow \mathbb{C}$ $*$-linear, $L(p^2)\ge 0$ $\forall$ $p \in A_S$ $\Leftrightarrow$ $L(pp^*)\ge 0$ $\forall$ $p\in \mathbb{C}[S]$  $\Leftrightarrow$ $L$ is positive (semi)definite, terminology as in \cite{BC}, \cite{BCR} and \cite{BM}.
Consequently, Corollary \ref{berg et al 2} generalizes and provides another proof of what is proved in \cite[Corollary 2.5]{BC} and \cite[Theorem 4.2.5]{BCR}.
\end{rem}

\section{Berg-Maserick result} \label{weak absolute values}

In this section we relax the requirement that an absolute value satisfies $\phi(1)\ge 1$. If $\phi(1)<1$ then, since $\phi(s) = \phi(s1)\le \phi(s)\phi(1)$ $\forall$ $s\in S$, $\phi$ is identically zero. Then $\| \, \|_{\phi}$ is also identically zero, so the topology on $\mathbb{C}[S]$ is the trivial one and the closure of $\sum A_S^{2d}$ in $A_S$ is $A_S$. At the same time, $\mathcal{K}_{\phi} = \emptyset$ so $\operatorname{Pos}(\mathcal{K}_{\phi}) = A_S$. Consequently, Theorem \ref{density 2} and Corollary \ref{berg et al 2} continue to hold in this more general situation.

We explain how the Berg-Maserick result \cite[Theorem 2.1]{BM} can be deduced as a consequence of Corollary \ref{berg et al 2}. See Corollary \ref{berg-maserick}.

A \it weak absolute value \rm on $S$ is a function $\phi : S \rightarrow \mathbb{R}_{\ge 0}$ satisfying $$\phi(ss^*) \le \phi(s)^2 \ \forall \ s\in S.$$
Replacing $s$ by $s^*$, we see that $\phi(ss^*) \le \phi(s^*)^2$, so $$\phi(ss^*)\le \min\{ \phi(s)^2,\phi(s^*)^2\} \ \forall \ s\in S,$$ for any weak absolute value $\phi$ on $S$.

For any weak absolute value $\phi$ on $S$, define $\phi' : S \rightarrow \mathbb{R}_{\ge 0}$ by
$$\phi'(s) = \inf \{ \prod_{i=1}^k \min\{ \phi(s_i),\phi(s_i^*)\} \mid \ k\ge 1, \ s_1,\dots,s_k\in S, \ s=s_1\dots s_k\}.$$

\begin{lemma} \label{preabsolute value} Let $\phi$ be a weak absolute value on $S$. Then
\begin{enumerate}
\item $\phi'$ is an absolute value (possibly $\phi' \equiv 0$).
\item If $L: \mathbb{C}[S]\rightarrow \mathbb{C}$ is $*$-linear and positive semidefinite and $\exists$ $C>0$ such that $|L(s)| \le C\phi(s)$ $\forall$ $s\in S$, then $|L(s)| \le C\phi'(s)$ $\forall$ $s\in S$.
\item $\mathcal{K}_{\phi} = \mathcal{K}_{\phi'}$.
\end{enumerate}
\end{lemma}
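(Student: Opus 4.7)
The plan is to set $\psi(s):=\min\{\phi(s),\phi(s^*)\}$, so that $\psi(s^*)=\psi(s)$ and $\phi'(s)$ is the infimum of $\prod_{i=1}^k\psi(s_i)$ over all factorizations $s=s_1\cdots s_k$ of length $k\ge 1$. I would dispatch (1) and (3) quickly and reserve the work for (2).

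For (1), sub-multiplicativity $\phi'(st)\le \phi'(s)\phi'(t)$ is immediate from concatenating a factorization of $s$ with one of $t$, and $\phi'(s^*)=\phi'(s)$ follows by starring every factor of a factorization: the product $\prod_i\psi(s_i)$ is unchanged since $\psi$ is involution-invariant and $S$ is commutative. If $\phi'(s_0)>0$ for some $s_0$, then sub-multiplicativity applied to $s_0=s_0\cdot 1$ yields $\phi'(s_0)\le \phi'(s_0)\phi'(1)$, forcing $\phi'(1)\ge 1$; otherwise $\phi'\equiv 0$. For (3), the inclusion $\mathcal{K}_{\phi'}\subseteq \mathcal{K}_\phi$ is immediate from $\phi'\le \phi$; for the reverse, any semicharacter $\alpha\in\mathcal{K}_\phi$ satisfies $|\alpha(s)|=|\alpha(s^*)|\le \psi(s)$ and, by multiplicativity, $|\alpha(s_1\cdots s_k)|=\prod_i|\alpha(s_i)|\le \prod_i\psi(s_i)$, so the infimum yields $|\alpha(s)|\le \phi'(s)$.

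For (2), $*$-linearity gives $L(s^*)=\overline{L(s)}$ and hence $|L(s^*)|=|L(s)|$, which already upgrades the hypothesis to $|L(s)|\le C\psi(s)$. I would then prove by induction on $k$ that
\[
|L(s_1\cdots s_k)|\le C\prod_{i=1}^k\psi(s_i)
\]
for every factorization; taking the infimum over factorizations of $s$ then yields $|L(s)|\le C\phi'(s)$. The inductive step rests on Cauchy--Schwarz for the positive semidefinite form $(p,q)\mapsto L(pq^*)$: with $p=s_1$ and $q=(s_2\cdots s_k)^*$,
\[
|L(s_1\cdots s_k)|^2\le L(s_1 s_1^*)\cdot L\bigl((s_2\cdots s_k)(s_2\cdots s_k)^*\bigr),
\]
and commutativity rewrites the second factor as $L\bigl(\prod_{i\ge 2}s_i s_i^*\bigr)$.

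The main obstacle is the auxiliary estimate
\[
L\Bigl(\prod_{i=1}^{j}u_i u_i^*\Bigr)\le C\prod_{i=1}^{j}\phi(u_i u_i^*),\qquad u_1,\ldots,u_j\in S,
\]
which I would prove by a separate induction on $j$. The inductive step applies Cauchy--Schwarz with $p=u_1 u_1^*$ and $q=\prod_{i\ge 2}u_i u_i^*$, both symmetric since $S$ is commutative, yielding $L(pq)^2\le L(p^2)L(q^2)$. The weak absolute value applied to $u_1 u_1^*$ bounds $L(p^2)\le C\phi\bigl((u_1 u_1^*)^2\bigr)\le C\phi(u_1 u_1^*)^2$, while $L(q^2)=L\bigl(\prod_{i\ge 2}(u_i^2)(u_i^2)^*\bigr)$ falls under the induction hypothesis with $v_i=u_i^2$ (using $(u_i u_i^*)^2=u_i^2(u_i^2)^*$ in commutative $S$). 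Combined with $\phi(u_i u_i^*)\le \psi(u_i)^2$, this delivers the required bound. The subtle point is that $\phi$ need not be sub-multiplicative in general; the weak identity $\phi(aa^*)\le \phi(a)^2$, applied to the symmetric elements $a=u_i u_i^*$ and $a=(u_i u_i^*)^2$ that arise from squaring, is what closes the induction.
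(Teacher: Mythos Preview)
Your proof is correct. Parts (1) and (3) are fine; in fact your direct argument for $\mathcal{K}_\phi\subseteq\mathcal{K}_{\phi'}$ via multiplicativity of semicharacters is more transparent than the paper's, which simply invokes (2) (semicharacters being positive semidefinite $*$-linear maps with $C=1$).

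For (2), your approach works but is more elaborate than necessary. The paper runs a \emph{single} induction on $k$ for the claim $|L(s_1\cdots s_k)|\le C\prod_i\psi(s_i)$: after Cauchy--Schwarz gives
\[
|L(s_1\cdots s_k)|^2\le L(s_1s_1^*)\,L(s_2s_2^*\cdots s_ks_k^*),
\]
it views the second factor as $L(t_2\cdots t_k)$ with $t_i:=s_is_i^*\in S$, a product of only $k-1$ semigroup elements, so the inductive hypothesis applies directly and yields $L(t_2\cdots t_k)\le C\prod_{i\ge2}\psi(t_i)=C\prod_{i\ge2}\phi(s_is_i^*)$. Combined with $L(s_1s_1^*)\le C\phi(s_1s_1^*)$ from the hypothesis and $\phi(s_is_i^*)\le\psi(s_i)^2$, this closes the induction in one stroke. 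Your separate auxiliary induction on $j$ (with a second layer of Cauchy--Schwarz and the substitution $v_i=u_i^2$) recovers the same bound but does more work; also, note that your ``main induction on $k$'' never actually invokes its own hypothesis once the auxiliary is in hand, so it is really a direct argument rather than an induction.
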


\begin{proof} (1) This is clear. 
(2) It suffices to show $$|L(s_1\dots s_k)| \le C \prod_{i=1}^k \min\{ \phi(s_i),\phi(s_i^*)\} \ \forall \ s_1,\dots,s_k \in S.$$ Since $|L(s)| \le C\phi(s)$ and $|L(s)| = |\overline{L(s)}| = |L(s^*)| \le C\phi(s^*)$, the result is clear when $k=1$. Suppose now that $k\ge 2$. We make use of the Cauchy-Schwartz inequality for the inner product $$\langle f,g\rangle := L(fg^*), \ f,g \in \mathbb{C}[S].$$ This implies, in particular, that $$|L(st^*)|^2 \le L(ss^*)L(tt^*) \ \forall \ s,t \in S.$$ Using this we obtain
\begin{align*}
|L(s_1\dots s_k)|^2 \le &L(s_1s_1^*)L(s_2s_2^*\dots s_ks_k^*) \le C\phi(s_1s_1^*)C\prod_{i=2}^k \phi(s_is_i^*)\\= &C^2\prod_{i=1}^k \phi(s_is_i^*) \le C^2 \prod_{i=1}^k \min\{ \phi(s_i)^2,\phi(s_i^*)^2\}.
\end{align*}
(the second inequality by induction on $k$). The result follows, by taking square roots. (3). Since $\phi'(s) \le \phi(s)$ for all $s\in S$, the inclusion $\mathcal{K}_{\phi'} \subseteq \mathcal{K}_{\phi}$ is clear. For the other inclusion, note that each $\alpha \in S'$ is positive semidefinite, so
$\mathcal{K}_{\phi} \subseteq \mathcal{K}_{\phi'}$, by (2).
\end{proof}

\begin{cor} \label{second corollary} Suppose $\phi$ is a weak absolute value on $S$. Then the closure of $\sum A_S^2$ in $A_S$ in the topology induced by the $\phi$-seminorm $\| f\|_{\phi} := \sum |f_s|\phi(s)$ is equal to $\operatorname{Pos}(\mathcal{K}_{\phi})$.
\end{cor}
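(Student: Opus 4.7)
The inclusion ($\subseteq$) is immediate: $\sum A_S^2 \subseteq \operatorname{Pos}(\mathcal{K}_\phi)$, and the latter is closed because for each $\alpha \in \mathcal{K}_\phi$, viewed as a ring homomorphism $A_S \to \mathbb{R}$, one has $|\alpha(f)| \le \|f\|_\phi$ (from $|\alpha(s)| \le \phi(s)$ and the triangle inequality), so each $\alpha$ is continuous and $\operatorname{Pos}(\mathcal{K}_\phi) = \bigcap_{\alpha\in \mathcal{K}_\phi} \alpha^{-1}(\mathbb{R}_{\ge 0})$ is an intersection of closed half-spaces.

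For the reverse inclusion ($\supseteq$), I would invoke Corollary \ref{hahn banach corollary}: it suffices to show that every continuous $\mathbb{R}$-linear $L : A_S \to \mathbb{R}$ with $L \ge 0$ on $\sum A_S^2$ satisfies $L(f) \ge 0$ for every $f \in \operatorname{Pos}(\mathcal{K}_\phi)$. Fix such an $L$. Continuity with respect to $\|\cdot\|_\phi$ gives a constant $C>0$ with $|L(g)| \le C\|g\|_\phi$ for all $g \in A_S$; in particular, $|L(s+s^*)|, |L(i(s-s^*))| \le 2C\phi(s)$ for each $s\in S$. Extend $L$ to a $*$-linear functional $L : \mathbb{C}[S] \to \mathbb{C}$ by $L(g+ih) := L(g)+iL(h)$ for $g,h\in A_S$; the resulting map satisfies $|L(s)|\le C'\phi(s)$ for some constant $C'$ depending only on $C$. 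By Remark \ref{no name}, the assumption $L(p^2)\ge 0$ for all $p\in A_S$ is equivalent to $L$ being positive semidefinite.

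Now the associated absolute value $\phi'$ of Lemma \ref{preabsolute value} enters. Part (2) of that lemma upgrades the bound to $|L(s)| \le C'\phi'(s)$ for all $s\in S$, and part (1) says $\phi'$ is a genuine absolute value. Applying Corollary \ref{berg et al 2} with $d=1$ (the degenerate case $\phi'\equiv 0$ forces $L\equiv 0$ on $S$, hence $L\equiv 0$, and the conclusion is trivial), there is a unique positive borel measure $\mu$ on $\mathcal{K}_{\phi'}$ representing $L$. By Lemma \ref{preabsolute value}(3), $\mathcal{K}_{\phi'} = \mathcal{K}_\phi$, so $\mu$ lives on $\mathcal{K}_\phi$. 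Since $\hat f \ge 0$ there, $L(f) = \int \hat f\, d\mu \ge 0$, completing the proof.

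The substantive content is already packaged in Lemma \ref{preabsolute value} and Corollary \ref{berg et al 2}; the only real decision is the route via Hahn--Banach (Corollary \ref{hahn banach corollary}) rather than a direct constructive argument as in Theorem \ref{density 2}. A direct argument would be awkward here because the analogue of Lemma \ref{basicapproximation} is not immediately available for a mere weak absolute value, so the duality detour through $\phi'$ is the natural device. The only minor subtlety to watch is the degenerate case $\phi'\equiv 0$ (which can occur when $\phi(1)<1$, as discussed at the start of the section), handled trivially above.
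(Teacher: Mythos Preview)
Your route is essentially the paper's: reduce to the dual statement via Corollary \ref{hahn banach corollary}, use Lemma \ref{preabsolute value} to replace the weak absolute value $\phi$ by the genuine absolute value $\phi'$, then invoke the result already established for absolute values. The paper packages this slightly differently (it argues $\overline{\sum A_S^2}^{\phi} = \overline{\sum A_S^2}^{\phi'}$ and then cites Theorem \ref{density 2} rather than Corollary \ref{berg et al 2}), but the content is the same.

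There is one small slip. You write $|L(s+s^*)|,\,|L(i(s-s^*))| \le 2C\phi(s)$, but a weak absolute value need not satisfy $\phi(s^*)=\phi(s)$, so for $s\ne s^*$ one only gets $\|s+s^*\|_\phi = \phi(s)+\phi(s^*)$ and hence $|L(s+s^*)|\le C(\phi(s)+\phi(s^*))$; this does not directly yield $|L(s)|\le C'\phi(s)$. The repair is the same Cauchy--Schwarz trick used inside the proof of Lemma \ref{preabsolute value}(2): since $ss^*\in A_S$ is a single semigroup element, continuity gives $0\le L(ss^*)\le C\phi(ss^*)\le C\phi(s)^2$, and then $|L(s)|^2 = |\langle s,1\rangle|^2 \le L(ss^*)\,L(1)\le C\,L(1)\,\phi(s)^2$, so $|L(s)|\le \sqrt{C\,L(1)}\,\phi(s)$ as required. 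With this adjustment your argument goes through.
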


\begin{proof} Denote the closure of $\sum A_S^2$ in $A_S$ in the topology induced by the $\phi$-seminorm by $\overline{\sum A_S^2}^{\phi}$. By Lemma \ref{preabsolute value}(2), an $\mathbb{R}$-linear map $L : A_S \rightarrow \mathbb{R}$ non-negative on $\sum A_S^2$ is continuous in the topology induced by $\| \, \|_{\phi}$ iff it is continuous in the topology induced by $\| \, \|_{\phi'}$. It follows, using Corollary \ref{hahn banach corollary}, that $\overline{\sum A_S^2}^{\phi} = \overline{\sum A_S^2}^{\phi'}$. By Lemma \ref{preabsolute value}(3), $\mathcal{K}_{\phi} = \mathcal{K}_{\phi'}$ so $\operatorname{Pos}(\mathcal{K}_{\phi}) = \operatorname{Pos}(\mathcal{K}_{\phi'})$. The result follows now using Lemma \ref{preabsolute value}(1) and Theorem \ref{density 2}. 
\end{proof}

\begin{cor} \label{berg-maserick} Suppose $L : \mathbb{C}[S]\rightarrow \mathbb{C}$ is $*$-linear and positive semidefinite and there exists a weak absolute value $\phi$ on $S$ and a constant $C>0$ such that $|L(s)| \le C\phi(s)$ $\forall$ $s\in S$. Then there exists a unique positive borel measure $\mu$ on $\mathcal{K}_{\phi}$ such that $L(f) = \int \hat{f} d\mu$ for each $f\in \mathbb{C}[S]$.
\end{cor}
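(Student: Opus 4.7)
The plan is to reduce the statement to Corollary \ref{berg et al 2} (with $d=1$) by replacing the weak absolute value $\phi$ with the genuine absolute value $\phi'$ manufactured in Lemma \ref{preabsolute value}, and then transporting the resulting measure back using the equality of the underlying compact sets.

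First, I would translate the positive semidefiniteness hypothesis. By Remark \ref{no name}, the condition that $L$ is $*$-linear and positive semidefinite is equivalent to saying that $L(p^2)\ge 0$ for every $p\in A_S$, which is the second-moment hypothesis of Corollary \ref{berg et al 2} in the case $d=1$. This is also precisely the hypothesis required in order to apply the Cauchy--Schwartz argument inside Lemma \ref{preabsolute value}(2).

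Next, I would invoke Lemma \ref{preabsolute value}(2) to upgrade the weak-absolute-value bound $|L(s)|\le C\phi(s)$ to the stronger bound $|L(s)|\le C\phi'(s)$ for every $s\in S$. By Lemma \ref{preabsolute value}(1), $\phi'$ is an absolute value on $S$ (possibly identically zero). Thus all hypotheses of Corollary \ref{berg et al 2} are satisfied for $\phi'$ and $d=1$, and I obtain a unique positive borel measure $\mu$ on $\mathcal{K}_{\phi'}$ with $L(f)=\int \hat f\, d\mu$ for every $f\in \mathbb{C}[S]$. Finally, Lemma \ref{preabsolute value}(3) gives $\mathcal{K}_{\phi}=\mathcal{K}_{\phi'}$, so $\mu$ may be regarded as a measure on $\mathcal{K}_{\phi}$, and its uniqueness transfers along this equality.

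The only case requiring separate attention is the degenerate one where $\phi'\equiv 0$: here the bound $|L(s)|\le C\phi'(s)=0$ forces $L\equiv 0$, while simultaneously $\mathcal{K}_{\phi}=\mathcal{K}_{\phi'}=\emptyset$ and the zero measure is trivially the unique representing measure; this is covered by the extension of Corollary \ref{berg et al 2} to trivial topologies discussed in the opening paragraph of this section. No step presents a genuine obstacle, as all the analytic content has been packaged into Lemma \ref{preabsolute value} and Corollary \ref{berg et al 2}; the work here is purely assembling the pieces.
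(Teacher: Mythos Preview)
Your proposal is correct and follows exactly the route the paper takes: reduce to Corollary \ref{berg et al 2} with $d=1$ via Lemma \ref{preabsolute value}, using Remark \ref{no name} to translate positive semidefiniteness into the $L(p^2)\ge 0$ hypothesis. The paper compresses all of this into the single sentence ``In view of Lemma \ref{preabsolute value}, this is immediate from Corollary \ref{berg et al 2},'' but your expansion of the details (including the degenerate case $\phi'\equiv 0$) is accurate.
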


\begin{proof} 
In view of Lemma \ref{preabsolute value}, this is immediate from Corollary \ref{berg et al 2}.
\end{proof}

Since the argument in Lemma \ref{preabsolute value}(2) makes essential use of the Cauchy-Schwartz inequality, it seems unlikely that Corollaries \ref{second corollary} and \ref{berg-maserick} extend to the case $d>1$.

\section{appendix}\label{appendix}

Let $A$ be a commutative ring with $1$. For simplicity assume $\mathbb{Q} \subseteq A$. In what follows, $P$ is assumed to be a preprime of $A$, and $M \subseteq A$ is a $P$-module.

We explain how the simple proof of Jacobi's theorem in the case $d=1$ found in \cite[Theorem 5.4.4]{M} can be extended to $d>1$. We use the following lemma:

\begin{lemma} \label{basic trick} Suppose $M$ is archimedean, $a\in A$, $t\in P$, $at-1\in M$. Let $n$ be a positive integer which is even. Then for any sufficiently large $k \in \mathbb{Q}$, $$k^n(a+r)-1-(k-t)^n(a+r) \in M$$ for each non-negative $r \in \mathbb{Q}$.
\end{lemma}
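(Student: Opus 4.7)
The key algebraic identity is $k^n - (k-t)^n = t\,Q(k,t)$, where
\[
Q(k,t) := \sum_{j=0}^{n-1}(-1)^j\binom{n}{j+1}k^{n-1-j}t^j = nk^{n-1} - \binom{n}{2}k^{n-2}t + \cdots + (-1)^{n-1}t^{n-1}.
\]
Setting $m_0 := at - 1 \in M$, so that $t(a+r) = 1 + m_0 + rt$, one obtains the decomposition
\[
k^n(a+r) - 1 - (k-t)^n(a+r) = \bigl(Q(k,t) - 1\bigr) + Q(k,t)\,m_0 + rt\,Q(k,t).
\]
The strategy is to show the first two summands combine to $(nk^{n-1} - 1 - D(k)) + m$ with $m \in M$ and $D \in \mathbb{Q}[k]$ of degree at most $n-2$ and independent of $r$, while the third summand lies in $M$ uniformly in $r$; then the scalar $nk^{n-1} - 1 - D(k)$ is non-negative for all sufficiently large $k \in \mathbb{Q}$, hence in $\mathbb{Q}_{\ge 0}\subseteq P\subseteq M$, completing the argument.

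The principal device is archimedean absorption. In the expansion of $Q(k,t)$, each even-$j$ term $\binom{n}{j+1}k^{n-1-j}t^j$ lies in $P\subseteq M$ (including the leading $nk^{n-1}$). For each odd-$j$ term $-\binom{n}{j+1}k^{n-1-j}t^j$, invoke the archimedean hypothesis on the fixed element $t^j\in A$ to find $N_j\in\mathbb{N}$ with $N_j - t^j\in M$, and rewrite $-t^j = (N_j - t^j) - N_j$. This converts the negative term into an element of $P\cdot M \subseteq M$ plus the rational scalar $-\binom{n}{j+1}N_j k^{n-1-j}$. Collecting yields $Q(k,t) = (nk^{n-1} - D_1(k)) + m_1$ with $m_1\in M$, $D_1\in\mathbb{Q}[k]$ of degree at most $n-2$; hence $Q(k,t)\in M$ for all sufficiently large $k$, and consequently $tQ(k,t) = t\cdot Q(k,t) \in P\cdot M \subseteq M$ for such $k$.

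The same absorption trick applied to $Q(k,t)m_0$, now with archimedean invoked on the fixed elements $t^jm_0\in A$, yields $Q(k,t)m_0 = -D_2(k) + m_2$ with $m_2\in M$ and $\deg D_2\le n-2$. The $r$-dependent summand requires no further absorption: since $tQ(k,t)\in M$ from the previous paragraph, $rt\,Q(k,t) = r\cdot tQ(k,t) \in P\cdot M \subseteq M$ for every $r\in\mathbb{Q}_{\ge 0}$, uniformly in $r$. Adding the three contributions gives
\[
k^n(a+r) - 1 - (k-t)^n(a+r) = \bigl(nk^{n-1} - 1 - D_1(k) - D_2(k)\bigr) + m,
\]
with $m\in M$. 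Since $\deg(1 + D_1 + D_2) \le n-2 < n-1$, the scalar polynomial is non-negative for all $k$ large, so the whole expression lies in $M+M = M$.

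The main obstacle is the bookkeeping required to keep $D_1$ and $D_2$ independent of $r$. This is achieved by handling the $r$-dependent term via the derived membership $tQ\in M$, rather than by absorbing negative contributions inside $r\cdot[k^n - (k-t)^n]$: the latter approach would produce correction terms $-c\,N_j\, k^{n-j}\, r$ growing linearly in $r$, which cannot be dominated by the $r$-free leading contribution $nk^{n-1}$, so the threshold in $k$ would depend on $r$ — violating the conclusion of the lemma.
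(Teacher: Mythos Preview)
Your proof is correct and follows essentially the same strategy as the paper: expand $k^n-(k-t)^n = tQ(k,t)$, isolate the $r$-dependence as $rt\,Q(k,t)$, show $Q(k,t)\in M$ for large $k$ (this is the paper's (6.1)) so the $r$-term lies in $P\cdot M$ uniformly, and absorb the remaining terms via the archimedean hypothesis while collecting rational corrections of degree $\le n-2$ in $k$. Your grouping $(Q-1)+Qm_0$ with $m_0=at-1\in M$ is a mild and arguably cleaner variant of the paper's grouping (6.2), which instead keeps the $a$-dependent terms together and reduces them via identities like $t^3a = t^2(k+ta)-kt^2$; both routes arrive at the same endgame.
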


\begin{proof} Since
\begin{align*}
k^n(a+r)-1-(k-t)^n(a+r) =& -1 -\sum_{i=1}^n \begin{pmatrix} n\\i\end{pmatrix}k^{n-i}(-t)^i(a+r) \\ =&-1+\begin{pmatrix}n\\1\end{pmatrix}k^{n-1}-\sum_{i=2}^n \begin{pmatrix} n\\i\end{pmatrix} k^{n-i}(-t)^ia \\+&\begin{pmatrix}n\\1\end{pmatrix} k^{n-1}(at-1)+rt\sum_{i=1}^n \begin{pmatrix}n\\i\end{pmatrix} k^{n-i}(-t)^{i-1}
\end{align*}
and $at-1\in M$, it suffices to show that
\begin{equation}\label{eq1}
\sum\limits_{i=1}^n \begin{pmatrix}n\\i\end{pmatrix} k^{n-i}(-t)^{i-1} \in M
\end{equation}
and
\begin{equation}\label{eq2}
-1+\begin{pmatrix}n\\1\end{pmatrix}k^{n-1}-\sum\limits_{i=2}^n \begin{pmatrix} n\\i\end{pmatrix} k^{n-i}(-t)^ia \in M
\end{equation}
for sufficiently large $k$. Since $$\sum_{i=1}^n \begin{pmatrix} n\\i\end{pmatrix} k^{n-i}(-t)^{i-1} = k^{n-2}[\begin{pmatrix}n\\1\end{pmatrix} k -\begin{pmatrix} n\\2\end{pmatrix}t]+k^{n-4}t^2[\begin{pmatrix}n\\3\end{pmatrix} k-\begin{pmatrix} n\\4\end{pmatrix}t]+\dots,$$ and $M$ is archimedean, so $\begin{pmatrix}n\\1\end{pmatrix} k -\begin{pmatrix} n\\2\end{pmatrix}t, \begin{pmatrix}n\\3\end{pmatrix} k-\begin{pmatrix} n\\4\end{pmatrix}t, \dots$ belong to $M$ for $k$ sufficiently large, (\ref{eq1}) is clear, for $k$ sufficiently large.

Regarding (\ref{eq2}), write

\begin{align*}
\begin{pmatrix} n\\3\end{pmatrix} k^{n-3}t^3a=&\begin{pmatrix}n\\3\end{pmatrix}k^{n-3}t^2(k+ta)-\begin{pmatrix}n\\3\end{pmatrix}k^{n-2}t^2,\\ -\begin{pmatrix}n\\4\end{pmatrix} k^{n-4}t^4a =& \begin{pmatrix}n\\4\end{pmatrix}k^{n-4}t^2(k^2-t^2a)-\begin{pmatrix}n\\4\end{pmatrix} k^{n-2}t^2, \text{ etc}.,
\end{align*}
choosing $k$ so large that $k+ta$, $k^2-t^2a$, etc., belong to $M$,  we are reduced to showing that
\begin{equation}\label{eq3}
-1+\begin{pmatrix}n\\1\end{pmatrix}k^{n-1}-\begin{pmatrix}n\\2\end{pmatrix}k^{n-2}t^2a-\begin{pmatrix}n\\3
\end{pmatrix} k^{n-2}t^2-\begin{pmatrix}n\\4\end{pmatrix}k^{n-2}t^2-\dots \in M
\end{equation}
for $k$ sufficiently large. Dividing through by $k^{n-2}$, using the fact that $M$ is archimedean and $\begin{pmatrix}n\\ 1\end{pmatrix} k-\frac{1}{k^{n-2}} \ge \begin{pmatrix}n\\ 1\end{pmatrix}k-1$ if $k\ge 1$, we see that this is true, i.e., (\ref{eq3}) does indeed hold for $k$ sufficiently large.
\end{proof}

\noindent
\it Proof of Theorem \ref{jacobi theorem}. \rm We argue as in \cite[Theorem 5.4.4]{M}. Let $P= \sum A^{2d}$. Set $M_1 := M-aP$. Since $M\subseteq M_1$, $M_1$ is archimedean. The assumption $\alpha(a)>0$ for all $\alpha \in \mathcal{K}_M$ implies $\mathcal{K}_{M_1} = \emptyset$ and, as noted earlier, $P$ is generating, so, by \cite[Corollary 5.4.1]{M}, $-1 \in M_1$. Thus $-1 = s-at$, $s\in M$, $t\in P$, so $at-1 = s\in M$. Apply Lemma \ref{basic trick} with $n=2d$ to conclude that $$k^{2d}(a+r)-1-(k-t)^{2d}(a+r) \in M$$ for each $r \in \mathbb{Q}_{\ge 0}$, for any $k$ sufficiently large. Dividing by $k^{2d}$, we see that $$a+r\in M \ \Rightarrow \ a+r-\frac{1}{k^{2d}} \in M.$$ Iterating, we obtain eventually that $a+r \in M$ for some negative $r\in \mathbb{Q}$, so $a = (a+r)+(-r)\in M$.\qed

\medskip

Recall: A preprime $P$ of $A$ is \it torsion \rm if $\forall$ $a\in A$ $\exists$ $n\ge 1$ such that $a^n \in P$, and $P$ is \it weakly torsion \rm if $\forall$ $a\in A$ $\exists$ rational $r>0$ and $n\ge 1$ such that $(r+a)^n \in P$. Clearly, for any preprime $P$, $$P \text{ is a preordering } \Rightarrow \ P \text{ is torsion } \Rightarrow \ P \text{ is weakly torsion.}$$ It is proved in \cite[Lemma 2.4]{M0} that any preprime which is weakly torsion is generating. Moreover, one has the following extension of Jacobi's result:

\begin{theorem} \label{general result} Suppose $M\subseteq A$ is an archimedean $P$-module, $P$ a weakly torsion preprime of $A$. Then, for any $a\in A$, $$\hat{a} >0 \text{ on } \mathcal{K}_M \ \Rightarrow \ a\in M.$$
\end{theorem}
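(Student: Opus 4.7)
The plan is to adapt the proof of Theorem \ref{jacobi theorem} given above. The preliminary reduction is identical: set $M_1 := M - aP$. Then $M \subseteq M_1$ (since $0 \in aP$), so $M_1$ is archimedean; and $\hat{a} > 0$ on $\mathcal{K}_M$ forces $\mathcal{K}_{M_1} = \emptyset$. Since every weakly torsion preprime is generating (\cite[Lemma 2.4]{M0}), \cite[Corollary 5.4.1]{M} applies and yields $-1 \in M_1$. Thus $-1 = s - at$ for some $s \in M$ and $t \in P$, so $at - 1 = s \in M$.

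The next step in the Jacobi proof relied on $(k - t)^{2d} \in P$ for every rational $k$, which was automatic because $P = \sum A^{2d}$. Here we invoke instead the weakly torsion hypothesis applied to the element $-t$, producing a rational $r_0 > 0$ and a positive integer $n_0$ with $(r_0 - t)^{n_0} \in P$; replacing $n_0$ by $2n_0$ if necessary (noting $((r_0-t)^{n_0})^2 = (r_0-t)^{2n_0} \in P$), we may assume $n_0$ is even. The plan is to apply Lemma \ref{basic trick} with $k = r_0$ and $n = n_0$. The lemma demands $k$ be sufficiently large---the threshold depending on $n_0$ and the archimedean bounds for $t$, $ta$, $t^2 a$, and so on---so if $r_0$ is inadequate, we enlarge the base by applying weakly torsion instead to the translate $K - t$ for a chosen rational $K > 0$: this yields a rational $r' > 0$ and a positive integer $n'$ with $((K + r') - t)^{n'} \in P$, giving a new base $K + r' \geq K$ (after squaring $n'$ for parity).

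Once $k > 0$ and an even $n$ are in hand with $(k - t)^n \in P$ and $k$ above the lemma's threshold, Lemma \ref{basic trick} yields $k^n(a + r) - 1 - (k - t)^n(a + r) \in M$ for every $r \in \mathbb{Q}_{\geq 0}$. Since $(k - t)^n \in P$, whenever $a + r \in M$ we have $(k - t)^n(a + r) \in P \cdot M \subseteq M$, hence $k^n(a + r) - 1 \in M$, and multiplying by $k^{-n} \in P$ gives $a + r - k^{-n} \in M$. Iterating this implication, starting from $a + N \in M$ (for some $N \in \mathbb{N}$ by archimedeanness), we reach $a + N - jk^{-n} \in M$ for all $j \geq 0$, eventually obtaining $a - \epsilon \in M$ for some $\epsilon \in \mathbb{Q}_{>0}$. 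Then $a = (a - \epsilon) + \epsilon \in M + \mathbb{Q}_{\geq 0} \subseteq M$.

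The main obstacle is the calibration between base and exponent in the application of Lemma \ref{basic trick}: its threshold on $k$ grows linearly with $n$, while the weakly torsion hypothesis provides the exponent only existentially. The device of applying weakly torsion to a translate $K - t$ rather than $-t$ itself permits choosing the base essentially freely and then verifying the resulting threshold inequality for whatever exponent the hypothesis returns; making this fully rigorous is the technical heart of the argument carried out in \cite[Theorem 2.3]{M0}.
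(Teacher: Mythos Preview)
The paper's own ``proof'' is nothing more than the citation to \cite[Theorem 2.3]{M0}; immediately afterward the authors state explicitly: ``Unfortunately, it is not clear how Lemma \ref{basic trick} can be applied to give a proof of Theorem \ref{general result}.'' Your proposal attempts precisely the route the authors flag as not working, and the gap you yourself identify is real and unresolved by your argument.

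Concretely: the threshold on $k$ in Lemma \ref{basic trick} depends on the exponent $n$ (at least linearly, as you note, through conditions like $\binom{n}{1}k - \binom{n}{2}t \in M$, and in fact through the requirements $k^j \pm t^j a \in M$ for $j$ ranging up to $n-2$). Your ``translate device'' lets you push the base $K + r'$ as high as you like, but only at the cost of receiving back from the weakly torsion hypothesis a \emph{new} exponent $n'$ that depends on $K$ and over which you have no control. Nothing prevents the threshold determined by $n'$ from exceeding $K + r'$, so the calibration is genuinely circular; the sketch does not close it. Moreover, the proof in \cite{M0} predates Lemma \ref{basic trick} and proceeds by different means, so it is not accurate to say that \cite[Theorem 2.3]{M0} is where this particular device is made rigorous.

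In the end your proposal, like the paper, rests on the citation to \cite[Theorem 2.3]{M0}; the additional sketch via Lemma \ref{basic trick} does not constitute an independent proof, and the paper's authors themselves regard that route as open.
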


\begin{proof} See \cite[Theorem 2.3]{M0}.
\end{proof}
Unfortunately, it is not clear how  Lemma \ref{basic trick} can be applied to give a proof of Theorem \ref{general result}.

\end{document}